\date{\today}
\newcommand{\Z}{{\mathbb Z}}
\newcommand{\R}{{\mathbb R}}
\newcommand{\N}{{\mathbb N}}
\newtheorem{theorem}{Theorem}[section]
\newtheorem{lemma}[theorem]{Lemma}
\newtheorem{prop}[theorem]{Proposition}
\newtheorem{coro}[theorem]{Corollary}
\newtheorem{defi}[theorem]{Definition}
\newtheorem{quest}[theorem]{Question}
\newcommand{\tr}{\mathrm{tr} }
\begin{document}

\title[Schr\"odinger Operators Associated With Aperiodic Subshifts]{Continuum Schr\"odinger Operators Associated With Aperiodic Subshifts}

\author[D.\ Damanik]{David Damanik}

\address{Department of Mathematics, Rice University, Houston, TX~77005, USA}

\email {\href{mailto:damanik@rice.edu}{damanik@rice.edu}}

\urladdr {\href{http://www.ruf.rice.edu/~dtd3}{www.ruf.rice.edu/$\sim$dtd3}}

\thanks{D.\ D.\ was supported in part by a Simons Fellowship and NSF grant DMS--1067988.}

\author[J.\ Fillman]{Jake Fillman}

\address{Department of Mathematics, Rice University, Houston, TX~77005, USA}

\email{\href{mailto:jdf3@rice.edu}{jdf3@rice.edu}}

\author[A.\ Gorodetski]{Anton Gorodetski}

\address{Department of Mathematics, University of California, Irvine, CA~92697, USA}

\email{\href{mailto:asgor@math.uci.edu}{asgor@math.uci.edu}}

\urladdr{\href{http://math.uci.edu/~asgor/}{http://math.uci.edu/~asgor/}}

\thanks{A.\ G.\ was supported in part by NSF grant IIS-1018433.}

\begin{abstract}
We study Schr\"odinger operators on the real line whose potentials are generated by an underlying ergodic subshift over a finite alphabet and a rule that replaces symbols by compactly supported potential pieces. We first develop the standard theory that shows that the spectrum and the spectral type are almost surely constant, and that identifies the almost sure absolutely continuous spectrum with the essential closure of the set of energies with vanishing Lyapunov exponent. Using results of Damanik-Lenz and Klassert-Lenz-Stollmann, we also show that the spectrum is a Cantor set of zero Lebesgue measure if the subhift satisfies the Boshernitzan condition and the potentials are aperiodic and irreducible. We then study the case of the Fibonacci subshift in detail and prove results for the local Hausdorff dimension of the spectrum at a given energy in terms of the value of the associated Fricke-Vogt invariant. These results are elucidated for some simple choices of the local potential pieces, such as piecewise constant ones and local point interactions. In the latter special case, our results explain the occurrence of so-called pseudo bands, which have been pointed out in the physics literature.
\end{abstract}

\maketitle

\section{Introduction}\label{sec:intro}

Operators associated with strictly ergodic subhifts over finite alphabets have been studied in numerous papers since the 1980's. Many of the developments were surveyed in \cite{D}. Until quite recently, most of the effort was devoted to discrete Schr\"odinger operators arising in this way. On the other hand, there have been several works considering continuum Schr\"odinger operators \cite{KLS}, Jacobi matrices \cite{Y}, and CMV matrices \cite{DL2, DMY, MO}. Along the way it has been realized that many tools and results are quite universal in these various instances. On the other hand, when implementing these tools and proving these results, one does have to deal with some model-dependent features. In other words, one may in general be guided by analogies but there does not seem to be an automatic way of carrying over results from one model to another.

The central example in the discrete Schr\"odinger setting is given by the Fibonacci Hamiltonian. Indeed, the foundational papers by Kohmoto, Kadanoff, Tang \cite{KKT} and S\"ut\H{o} \cite{S87, S89} studied this particular model and proved for it the spectral properties, zero-measure spectrum and singular continuous spectral measures, which turned out to be characteristic for the entire class of models. Moreover, the study of this key example is additionally motivated by its relevance to the study of quasicrystals. We refer the reader to the recent survey \cite{DEG}, which further explores this connection. Continuum analogues of the Fibonacci Hamiltonian, especially those with local point interactions leading to the so-called Kronig-Penney model, have been studied in \cite{BJK, Gh, H, KN, KS, TK, WSS}.

The general theory has not yet been worked out to the natural extent possible in the continuum Schr\"odinger setting. Specifically, the paper \cite{KLS} does not address zero-measure Cantor spectrum, and in fact it replaces the use of Kotani theory with Remling's non-ergodic version of it. While this generates a more general result on singular spectrum, it fails to provide the starting point for a proof of zero-measure Cantor spectrum. 

One main motivation for writing this paper is to study the continuum setting with the help of Kotani theory in order to prove zero-measure Cantor spectrum in the same generality as it is known to hold in for discrete Schr\"odinger operators. To the best of our knowledge, this paper gives the first explicit example of a continuum Schr\"odinger operator whose spectrum is a Cantor set of zero Lebesgue measure.\footnote{It appears to be even the first example of a continuum Schr\"odinger operator with potential in $L^2_\mathrm{loc,unif}(\R)$ whose spectrum has zero Lebesgue measure.} In the discrete case, zero-measure spectrum is known to hold also for the critical almost Mathieu operator \cite{AK06, L94}. This, however, is a very unstable phenomenon and quite atypical in the class of operators the almost Mathieu operator belongs to. In particular, it is doubtful whether there is a smooth quasi-periodic continuum potential so that the associated Schr\"odinger operator in $L^2(\R)$ has zero-measure spectrum. In the discrete setting, zero-measure Cantor spectrum is also typical for limit-periodic potentials in the sense of topological genericity; see \cite{A}. No continuum analog is known, but may possibly hold. The zero-measure Cantor spectrum phenomenon is unstable in this scenario as well due to the fact that the periodic potentials are dense in the limit-periodic ones. On the other hand, the zero-measure Cantor spectrum of discrete Schr\"odinger operators associated with suitable strictly ergodic subshifts is both typical and stable within this class of operators. Thus, the latter result is the primary candidate for an attempt to carry over the phenomenon of zero-measure Cantor spectrum to the continuum, and this is precisely what we accomplish here.

The second main motivation for writing this paper is to study the spectrum of the continuum version of the Fibonacci Hamiltonian, which is of interest for several reasons. Since the continuum operator is unbounded, its spectrum is unbounded as well, and hence there are potentially interesting questions one can ask about the high-energy regime. Related to this, the energy-dependence of the local structure of the spectrum may be reduced to the value the so-called Fricke-Vogt invariant takes at the energy in question. This value happens to be constant in the discrete case, while it is not constant (and hence carries more information) in the continuum setting. Secondly, we are able to address a phenomenon that has been pointed out in the physics literature. Namely, the Fibonacci Kronig-Penney model may exhibit so-called pseudo bands, where the spectrum is locally quite thick and in numerical studies (see \cite{BJK, KS}) it is a priori not quite clear if the Cantor structure of the spectrum breaks down at such energies. We show that this is due to the local Hausdorff dimension being one at such points, which in turn comes from zeros of the Fricke-Vogt invariant inside the spectrum. The latter phenomenon is impossible in the discrete case!

\bigskip

The organization of the paper is as follows. In Section~\ref{sec:2} we describe the general setting we will work in. That is, we recall the definition of a subshift over a finite alphabet and describe how we associate continuum Schr\"odinger operators. We also prove the standard results that, given an ergodic measure on the subshift, the spectrum and the spectral type do almost surely not depend on the element of the subshift, and the almost sure spectrum does not contain any isolated points. In Section~\ref{sec:3} we introduce the Lyapunov exponent. To be precise, we introduce both Lyapunov exponents that are natural in this setting -- one associated with the subhift dynamics and one associated with the space variable on the real line -- and prove that they are multiples of one another. Then we prove another standard result, namely that the almost sure absolutely continuous spectrum is given by the essential closure of the set of energies where the Lyapunov exponent vanishes. The work of Klassert, Lenz, and Stollmann \cite{KLS} is then used in Section~\ref{sec:4} to derive the absence of absolutely continuous spectrum whenever the potentials are aperiodic and irreducible. Our main point here is that whenever this result applies, the Lyapunov exponent is almost everywhere positive. This in turn then implies that the spectrum has zero Lebesgue measure, provided that the subshift satisfies the Boshernitzan condition; see Section~\ref{sec:5}. The Cantor structure is a consequence of this since the spectrum is always closed and does not contain isolated points as shown earlier. This completes the general theory for these continuum operators we wish to present. Moving to a special case, we then consider the subshift generated by the Fibonacci substitution in Section~\ref{sec.6}. We establish the usual connection between the spectrum and the dynamics of the trace map, prove a result expressing the local Hausdorff dimension of the spectrum in terms of the value of the Fricke-Vogt invariant at the energy in question, and study the latter quantity for two special cases, piecewise constant potentials and the Kronig-Penney model.

\section{Subshifts Over Finite Alphabets and Associated Continuum Schr\"odinger Operators}\label{sec:2}

In this section we introduce the operators we will study in this work and prove some general results concerning the spectrum and the spectral type of these operators.

First we introduce the notion of the concatenation to make rigorous our notion of ``piecing together functions.''  To that end, assume that for each $n \in \Z$, we have $\ell_n >0$ and a function $f_n:[0,\ell_n) \to \R$.  Assume further that $\sum_{n\geq 0 } \ell_n = \sum_{n < 0} \ell_n = \infty$ (this ensures that the pieced-together function will be defined on all of $\R$).  We define the concatenation of the $f_n$'s by

\[ f(x) = \left\{ \begin{array}{ll} f_n \left( x - \sum_{k=0}^{n-1} \ell_k \right) & n\geq 0 \mbox{ and } x \in \left[ \sum_{k=0}^{n-1} \ell_k, \sum_{k=0}^{n} \ell_k \right), \\ f_n \left( x+\sum_{k=n}^{-1} \ell_k \right) & n < 0 \mbox{ and } x \in \left[- \sum_{k=n}^{-1}\ell_k, -\sum_{k=n+1}^{-1} \ell_k \right). \end{array} \right. \]

We remark that the empty sums which occur for $n=0$ and $n=-1$ are defined to be equal to 0 by convention.  We will notate this concatenation of $f_n$'s by $f=\left( \cdots | f_{-2} | f_{-1}| \, \boxed{f_0} \, | f_1 |f_2| \cdots  \right)$.  Note the use of a box to indicate the position of the origin in the concatenation.  One can also concatenate only finitely many functions - for $m\leq 0 \leq n$, we can produce $\left( f_{m}| \cdots \boxed{f_0} \cdots |f_n \right) $ in the same way, but it will only be defined on $\left[ \sum_{k=m}^{-1} \ell_k, \sum_{k=0}^n \ell_k \right)$.

Let $\mathcal{A}$ be a finite set, called the alphabet. Consider the discrete topology on $\mathcal{A}$ and the product topology on $\mathcal{A}^\Z$. The shift transformation $T : \mathcal{A}^\Z \to \mathcal{A}^\Z$ is given by $(T \omega)(n) = \omega(n+1)$. Clearly, $T$ is a homeomorphism. A subset $\Omega \subseteq \mathcal{A}^\Z$ is called $T$-invariant if $T(\Omega) = \Omega$. Any closed $T$-invariant set $\Omega \subseteq \mathcal{A}^\Z$ is called a subshift.

For every $a \in \mathcal{A}$, we choose a length $\ell_a > 0$ and a real-valued function $f_a \in L^2(0,\ell_a)$. Let a subshift $\Omega \subseteq \mathcal{A}^\Z$ be given. For $\omega \in \Omega$, we define the Schr\"odinger operator $H_\omega = - \frac{d^2}{dx^2} + V_\omega$ in $L^2(\R)$ via $V_{\omega} = \left( \cdots f_{\omega_{-2}} | f_{\omega_{-1}} | \, \boxed{f_{\omega_0}} \, | f_{\omega_1} | f_{\omega_2} \cdots \right)$.  The potentials $V_\omega$ so defined belong to $L^2_\mathrm{loc, unif}(\R)$ and hence define self-adjoint operators in $L^2(\R)$ in a standard way. We will consider the operators $\{H_\omega\}_{\omega \in \Omega}$ as a family and be interested in statements about $H_\omega$ that hold for many, most, or all $\omega \in \Omega$.

Let $S_t$ denote the shift operator $(S_tf)(x) =f(x-t) $ on $L^2(\R)$.  For $\omega \in \Omega$, let $U_{\omega} = S_{\ell_{\omega_0}}$.  One readily sees that $U_{\omega}$ is unitary and a computation reveals that $H_{T\omega} = U_{\omega}^*H_{\omega}U_{\omega}$.  Thus, spectral data will be $T$-invariant, so one should expect the spectrum and spectral parts of $H_{\omega}$ to be nonrandom.  Indeed, one has the following fact:

\begin{prop}
Let $(\Omega,T)$ be a subshift endowed with an ergodic measure $\mu$.  Suppose $\mathcal{A}, \ell_a, f_a, V_{\omega},$ and $H_{\omega}$ are defined as above.  Then there are closed sets $\Sigma,\Sigma_{\textup{ac}},\Sigma_{\textup{sc}},\Sigma_{\textup{pp}} \subset \R$ such that $\sigma_{\bullet}(H_{\omega}) = \Sigma_{\bullet}$ for $\bullet \in \{\, \textup{, ac, sc, pp}\}$ for $\mu$ almost every $\omega\in\Omega$.
\end{prop}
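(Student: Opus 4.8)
The plan is to establish that each spectral component $\sigma_\bullet(H_\omega)$ is an almost surely constant set, exploiting the covariance relation $H_{T\omega} = U_\omega^* H_\omega U_\omega$ already derived in the excerpt. First I would observe that this unitary conjugation implies $\sigma_\bullet(H_{T\omega}) = \sigma_\bullet(H_\omega)$ for every $\omega$ and each of the four spectral types, since unitary equivalence preserves the spectrum and all of its components (the unitaries $U_\omega = S_{\ell_{\omega_0}}$ are spatial shifts, which commute with the spectral decomposition). Thus the four maps $\omega \mapsto \sigma_\bullet(H_\omega)$, viewed as set-valued functions, are constant along $T$-orbits.

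The key step is then to convert $T$-invariance of these set-valued functions into almost sure constancy via ergodicity. The standard device here is to work with the functions $\omega \mapsto \operatorname{tr}\big(g(H_\omega)\big)$ or, more robustly, with spectral projections tested against a fixed sequence of Borel sets. Concretely, I would fix a countable base $\{I_j\}$ for the topology of $\R$ (say rational intervals) and for each $j$ consider the scalar random variable $\omega \mapsto \langle \phi, \chi_{I_j}^\bullet(H_\omega)\phi\rangle$, where $\chi^\bullet$ denotes the spectral projection onto the appropriate spectral subspace (absolutely continuous, singular continuous, or pure point) and $\phi$ ranges over a fixed countable dense set in $L^2(\R)$. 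Each such function is $T$-invariant as a consequence of the covariance relation — one must check that conjugation by $U_\omega$ intertwines the spectral projections correctly, which follows because $U_\omega$ commutes with Borel functions of the conjugated operators. By ergodicity, each $T$-invariant measurable function is almost surely constant, so on a full-measure set these spectral quantities do not depend on $\omega$.

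From the almost sure constancy of these countably many scalar quantities I would reconstruct the almost sure constancy of the sets themselves: the presence of spectrum of a given type in a rational interval $I_j$ is detected by whether $\langle \phi, \chi_{I_j}^\bullet(H_\omega)\phi\rangle$ is nonzero for some $\phi$ in the countable dense set, and intersecting the countably many full-measure sets on which these quantities are constant yields a single full-measure set $\Omega_0$ on which all four spectral components are simultaneously independent of $\omega$. This defines the closed sets $\Sigma_\bullet$. Closedness is automatic since spectra and their components are always closed subsets of $\R$.

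The main obstacle I anticipate is the measurability of the family $\{H_\omega\}$ and of the associated spectral-projection matrix elements — that is, verifying that one genuinely has a measurable family of self-adjoint operators so that the functions $\omega \mapsto \langle \phi, \chi_{I_j}^\bullet(H_\omega)\phi\rangle$ are measurable and ergodicity applies. Because the operators act on a fixed Hilbert space $L^2(\R)$ but with $\omega$-dependent (unbounded) potentials, the cleanest route is to check strong resolvent measurability of $\omega \mapsto (H_\omega - z)^{-1}$ for $z$ in the resolvent set; the continuity of $\omega \mapsto V_\omega$ in a suitable $L^2_{\mathrm{loc}}$ sense, together with resolvent convergence estimates for $L^2_{\mathrm{loc,unif}}$ potentials, should supply this. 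Once strong resolvent measurability is in hand, measurability of the spectral projections follows from the functional calculus, and the ergodic argument above goes through routinely.
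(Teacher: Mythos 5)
There is a genuine gap at the key step. You apply the ergodic theorem to the scalar functions $\omega \mapsto \langle \phi, \chi_{I_j}^{\bullet}(H_\omega)\phi\rangle$ and assert that these are $T$-invariant "as a consequence of the covariance relation." They are not. Covariance gives $\chi_{I_j}^{\bullet}(H_{T\omega}) = U_\omega^* \chi_{I_j}^{\bullet}(H_\omega) U_\omega$, hence
$$
\langle \phi, \chi_{I_j}^{\bullet}(H_{T\omega})\phi\rangle = \langle U_\omega \phi, \chi_{I_j}^{\bullet}(H_\omega)\, U_\omega \phi\rangle ,
$$
and since $U_\omega\phi$ is a nontrivial translate of $\phi$ and $U_\omega$ does \emph{not} commute with the spectral resolution of $H_\omega$ (the potential is not translation invariant), this is in general different from $\langle \phi, \chi_{I_j}^{\bullet}(H_\omega)\phi\rangle$. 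Your parenthetical remark that the spatial shifts "commute with the spectral decomposition" is the source of the error: conjugation by $U_\omega$ carries the spectral projections of $H_\omega$ to those of the \emph{different} operator $H_{T\omega}$; it does not fix them. So the quantities you feed into the ergodic theorem are not invariant, and the almost sure constancy does not follow as written.

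The repair is to replace the matrix elements by a quantity that is invariant under unitary conjugation. The paper uses $\omega \mapsto \mathrm{tr}\big(\mathcal{P}_\omega^{\bullet}(I)\big)$, which satisfies $\mathrm{tr}(\mathcal{P}_{T\omega}^{\bullet}(I)) = \mathrm{tr}(U_\omega^*\mathcal{P}_\omega^{\bullet}(I)U_\omega) = \mathrm{tr}(\mathcal{P}_\omega^{\bullet}(I))$ and is therefore a genuine $T$-invariant (extended-real-valued) measurable function; it vanishes exactly when $I$ misses $\sigma_\bullet(H_\omega)$, which is all the reconstruction step needs. Alternatively, your own reconstruction already contains the fix: the indicator of the event $\{\chi_{I_j}^{\bullet}(H_\omega) \neq 0\}$ \emph{is} $T$-invariant (a projection is zero if and only if its unitary conjugate is), and it is measurable as a countable supremum over your dense set of $\phi$'s; apply ergodicity to these indicators rather than to the individual matrix elements. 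With that substitution the rest of your argument — countable base of rational intervals, intersection of full-measure sets, reconstruction of the sets $\Sigma_\bullet$ — matches the paper's proof. Your proposed route to measurability via strong resolvent measurability is an acceptable alternative to the paper's Trotter-product argument; that part is only a difference in technique, not in substance.
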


\begin{proof}
The proof is an adaptation of ideas in the literature; see, e.g., \cite{CL, CFKS, KM82}.  We will begin by showing that a suitable family of projections is weakly measurable.

For a fixed interval $I \subset \mathbb{R}$, let $\mathcal{P}_{\omega}(I) = \chi_I(H_{\omega})$.  Fix $f,g \in L^2(\R)$.  We shall show that the map $\omega \mapsto \langle f, \mathcal{P}_{\omega}(I)g \rangle$ is measurable.  First, for each $n \in \Z_+$, let $\Omega^{(n)} = \mathcal{A}^{\{-n,\ldots,n\}}$ and $\pi^{(n)}:\Omega \to \Omega^{(n)}$ the natural restriction map.  For $\omega \in \Omega$, we consider the cut off potential $V_{\omega}^{(n)}$, given by $V_{\omega}^{(n)} = V_{\omega} \chi_{I_{\omega}^{(n)}}$, with $I_{\omega}^{(n)} = (-(\ell_{\omega_{-n}} + \cdots + \ell_{\omega_{-1}}) , \ell_{\omega_0} + \cdots + \ell_{\omega_n})$.  We see that $\omega \mapsto \langle f, e^{itV_{ \omega}^{(n)}} g \rangle$  is measurable for all  $n\in\N , t\in \R$.  But then $\langle f, e^{itV_{ \omega}} g \rangle$ is the limit of measurable functions and hence is itself a measurable function of $\omega$.

By the Trotter product formula, it follows that $e^{i t H_{\omega}}$ is also a weakly measurable funtion of $\omega$.  We can approximate characteristic funtions of intervals with trigonometric polynomials to then see that $\mathcal{P}_{\omega}(I)$ is a weakly measurable function of $\omega$ for all intervals $I$.

Since $\mathcal{P}_{T\omega}(I) = U_{\omega}^*\mathcal{P}_{\omega}(I)U_{\omega}$, it follows that $\omega \mapsto \mbox{tr}(\mathcal{P}_{\omega}(I))$ is measurable and satisfies $\mbox{tr}(\mathcal{P}_{T\omega}(I)) = \mbox{tr}(\mathcal{P}_{\omega}(I))$, so $\mbox{tr}(\mathcal{P}_{\omega}(I))$ is almost surely constant.

For each pair of rational numbers $p<q$, let $d(p,q)$ denote the almost sure value of $\mbox{tr}(\mathcal{P}_{\omega}((p,q)))$ and $\Omega_{p,q}$ the (full measure) set containing those $\omega\in\Omega$ for which $\mbox{tr}(\mathcal{P}_{\omega}((p,q))) = d(p,q)$.  Since the set of rational pairs is countable, it follows that $\Omega_0 = \bigcap_{p<q}\Omega_{p,q}$ is a full measure subset of $\Omega$.

Suppose $E \notin \sigma(H_{\omega_1})$ for some $\omega_1 \in \Omega_0$.  Choose rational $p<q$ so that $E \in (p,q) \subset \rho(H_{\omega_1})$.  Then $d(p,q) = \mbox{tr}(\mathcal{P}_{\omega_1}((p,q))) = 0$.  For any other $\omega_2 \in \Omega_0$, one has $\mbox{tr}(\mathcal{P}_{\omega_2}((p,q))) = d(p,q) = 0$, so $E \notin \sigma(H_{\omega_2})$.  By symmetry, there is some nonrandom closed set $\Sigma \subset \R$ with $\sigma(H_{\omega}) = \Sigma$ for all $\omega \in \Omega_0$.

For the almost sure constancy of spectral parts, let $\mathcal{P}_\omega^{\textup{ac}}$ denote orthogonal projection onto the absolutely continuous subspace of $H_{\omega}$ and define $\mathcal{P}_\omega^{\textup{sc}}$ and $\mathcal{P}_\omega^{\textup{pp}}$ analogously.  Next, let $\mathcal{P}_{\omega}^{\bullet}(I) = \mathcal{P}_{\omega}(I) \mathcal{P}_{\omega}^{\bullet}$, for $\bullet \in \{\textup{ac, sc, pp}\}$.  Running the argument above with $\mathcal{P}_{\omega}(I)$ replaced by $\mathcal{P}_{\omega}^{\bullet}(I)$ (and a measurability proof as in \cite{CL, KM82}) gives us a full measure set $\Omega' \subseteq \Omega$ and nonrandom sets $\Sigma_{\textup{ac}},\Sigma_{\textup{sc}},\Sigma_{\textup{pp}}$ so that $\sigma_{\bullet}(H_{\omega})= \Sigma_{\bullet}$ for all $\omega \in \Omega'$ and $\bullet \in \{\textup{ac, sc, pp}\}$.
\end{proof}

In the usual setting of ergodic dynamically defined potentials, the spectrum lacks isolated points.  This fact carries over to the present setting.

\begin{lemma}\label{isolated}
Let $(\Omega,T)$ be a subshift endowed with an ergodic measure $\mu$.  Suppose $\mathcal{A}, \ell_a, f_a, V_{\omega}, H_{\omega},$ and $\Sigma$ are defined as above.  Then $\Sigma$ has no isolated points.
\end{lemma}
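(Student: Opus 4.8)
The plan is to show that no point $E \in \Sigma$ can be isolated by exploiting the translation structure of the potentials together with the almost-sure constancy of the spectrum established in the preceding proposition. The key observation is that the operators $H_\omega$ come with a one-parameter family of approximate translations: shifting the origin within a single potential piece $f_{\omega_0}$ produces a conjugate operator whose spectrum is identical. More precisely, for $0 \le t < \ell_{\omega_0}$, the translated potential $S_t V_\omega$ agrees with $V_{\omega'}$ for some $\omega' $ in the hull on the nose up to a compactly supported modification near the origin, and such compactly supported perturbations of $V_\omega$ do not change the essential spectrum. This suggests that $\Sigma$ is invariant under a rich family of operations, which is exactly the kind of rigidity that forbids isolated points.

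First I would set up the argument by fixing $E \in \Sigma$ and $\omega \in \Omega_0$ (the full-measure set from the proposition) with $E \in \sigma(H_\omega)$, and suppose for contradiction that $E$ is isolated, so that $\chi_{\{E\}}(H_\omega)$ is a nonzero finite-rank spectral projection corresponding to an eigenvalue $E$ of finite multiplicity. The heart of the matter is to produce a genuinely continuous deformation of $H_\omega$ that keeps $E$ in the spectrum while moving nearby, contradicting isolation. The natural mechanism is the continuous flow $t \mapsto S_t V_\omega$: since $V_\omega \in L^2_{\mathrm{loc,unif}}(\R)$, the map $t \mapsto S_{-t} H_\omega S_t$ is a norm-resolvent-continuous family of self-adjoint operators, and $\sigma(S_{-t}H_\omega S_t) = \sigma(H_\omega) = \Sigma$ identically in $t$ by unitary conjugation. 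I would then argue that over a full sweep $t \in [0, \sum_{k} \ell_{\omega_k})$ this flow realizes $H_{\omega'}$ for all $\omega'$ in the orbit closure, and use minimality/ergodicity of the subshift together with continuity of the resolvent to show that an isolated eigenvalue would have to persist as an isolated point of $\Sigma$ for all these operators while simultaneously being forced to vary continuously — which, combined with the fact that an isolated point of a fixed set $\Sigma$ cannot move, yields the contradiction unless no such isolated point exists.

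The cleanest route, which I would ultimately pursue, is to invoke the general principle that for an ergodic family the almost-sure spectrum has no isolated points whenever the dynamical system $(\Omega, T)$ has no finite orbits, reducing the claim to a statement about the hull: if $E$ were an isolated point of $\Sigma$, then $\chi_{\{E\}}(H_\omega) \ne 0$ on a positive-measure (hence, by ergodicity, full-measure) set of $\omega$, making $E$ an eigenvalue almost surely; but the ergodic theorem applied to the trace of the spectral projection onto $\{E\}$, together with the conjugation relation $\mathcal{P}_{T\omega}(\{E\}) = U_\omega^* \mathcal{P}_\omega(\{E\}) U_\omega$, forces the integrated density of states to have a jump at $E$, while the continuous translation covariance $S_{-t} H_\omega S_t$ shows the IDS is continuous. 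I would make this precise by computing $\mathrm{tr}(\mathcal{P}_\omega(\{E\}))$ per unit length and showing it must vanish, so that $\{E\}$ carries no spectral weight almost surely and $E$ cannot be isolated in $\Sigma$.

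I expect the main obstacle to be the measure-theoretic bookkeeping in transferring between the two shift structures — the discrete shift $T$ on $\Omega$ and the continuous translation flow on $\R$ — and in particular justifying that the continuous flow $S_t$ genuinely interpolates the discrete conjugations $U_\omega$ so that continuity of the integrated density of states can be extracted; the suspension construction relating the $\Z$-action to the $\R$-action, and verifying that the relevant normalized trace is well-defined and translation-invariant, is where the real care is needed, whereas the contradiction itself (continuous IDS versus a jump forced by an isolated eigenvalue of positive almost-sure multiplicity) is then immediate.
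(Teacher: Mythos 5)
Your ``cleanest route'' is essentially the paper's own proof: suspend $(\Omega,\mu,T)$ to a translation-invariant measure on the space of potentials, decompose $\mathrm{tr}(\mathcal{P}_\omega(I))$ (for an interval $I$ isolating $E$) into translation-covariant pieces of equal expectation so that its almost-sure value is either $0$ or $\infty$, and contradict the bound $\mathrm{tr}(\mathcal{P}_\omega(I))\le 2$ coming from the two-dimensionality of the solution space. One small caveat: continuity of the IDS does \emph{not} follow from translation covariance alone (and the unitary-conjugation ``deformation'' in your second paragraph produces no contradiction, since conjugation fixes the spectrum); the real input is that rank-$\le 2$ bound against the $0$-or-$\infty$ dichotomy, which your final sentence correctly identifies, so the substance is intact.
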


\begin{proof}
It is convenient for us to work with measures on the space of potentials, so we let $\nu$ be the push-forward of $\mu$ under the map $\Phi:\omega \mapsto V_{\omega}$.  We also will find it convenient to work with $\R$-invariant families of potentials, so, to that end, let $\tilde{\Omega} = \Omega \times [0,1)$.  For each $(\omega,t)\in\tilde{\Omega}$, we define $V_{\omega,t}(x) = V_{\omega}(x+t\ell_{\omega_0})$.  Let $\tilde{\mu}$ denote the product of $\mu$ and Lebesgue measure, and $\tilde{\nu}$ be the pushforward of $\tilde{\mu}$ under the map $\Psi:(\omega,t)\mapsto V_{\omega,t} $.  Denote $X=\{V_{\omega,t} : \omega\in\Omega , t\in [0,1)\}$

As in the proof of the previous proposition, we fix an interval $I$ and take $\mathcal{P}_{\omega,t} = \mathcal{P}_{\omega,t}(I) = \chi_I(H_{\omega,t})$ with $H_{\omega,t} = -\Delta + V_{\omega,t}$.  An argument similar to the one in the above proposition establishes that $\textup{tr}(\mathcal{P}_{\omega,t}) = \textup{tr}(\mathcal{P}_{\omega})$.

Let $\{g_n : n\in\Z\}$ be an orthonormal basis for $L^2(0,1)$ and set $g_{n,m} = S_m g_n$.  It follows that $\{g_{n.m} :n,m\in\Z\}$ is an orthonormal basis of $L^2(\R)$.

Let $d$ denote the $\mu$ almost sure value of $\textup{tr}(\mathcal{P}_{\omega})$ and use $\mathbb{E}$ to denote integration over a given probability space.  One can then observe

\begin{align*}
d & = \mathbb{E}_{\Omega}(\textup{tr}(\mathcal{P}_{\omega})) \\
  & = \mathbb{E}_{\tilde\Omega} (\textup{tr}(\mathcal{P}_{\omega,t})) \\
  & = \mathbb{E}_{\tilde\Omega} \left( \sum_{n,m} \langle g_{n,m}, \mathcal{P}_{\omega,t} g_{n,m} \rangle \right)\\
  & = \sum_{n,m}  \mathbb{E}_{\tilde\Omega} \left(  \langle g_{n,m}, \mathcal{P}_{\omega,t} g_{n,m} \rangle \right) \\
  & = \sum_{n,m}  \mathbb{E}_{\tilde\Omega} \left(  \langle g_n, S_m^*\mathcal{P}_{\omega,t} S_m g_n \rangle \right) \\
  & = \sum_{n,m} \int_{X} \! \langle g_n, S_m^*\chi_I(-\Delta+V) S_m g_n \rangle \, \mathrm{d}\tilde{\nu}(V)  \\
  & = \sum_{n,m} \int_{X} \! \langle g_n, \chi_I(-\Delta+V)  g_n \rangle \, \mathrm{d}\tilde{\nu}(V).
\end{align*}

The last equality holds because $\tilde{\nu}$ is invariant under the transformations $(S_r)_{r\in\R}$.  In particular, $d$ is either 0 or infinity according to whether or not $\int_{X} \! \langle g_n, \chi_I(-\Delta+V)  g_n \rangle \, \mathrm{d}\tilde{\nu}(V)$ is 0 for all $n$ or positive for some $n$, respectively.

It follows that $\Sigma$ cannot have an isolated point.  To see this, observe that if $E\in\Sigma$ is isolated, one can choose an interval $I$ with $I \cap  \Sigma = \{E\}$.  But since solution spaces are two-dimensional, this would require $1 \leq \textup{tr}(\mathcal{P}_{\omega}(I)) \leq 2$ (for $\mu$ almost every $\omega$), a contradiction. Hence, $\Sigma$ lacks isolated points.
\end{proof}

\section{The Lyapunov Exponent}\label{sec:3}

We can associate a discrete $\mathrm{SL}(2,\R)$ cocycle to our model as follows.  For a given energy $E$, $\omega \in \Omega$, and $x_0,x \in \R$, there is a unique unimodular matrix $A_{E,\omega}(x,x_0)$ so that $A_{E,\omega} (x,x_0) \left( \begin{array}{c} y'(x_0) \\ y(x_0) \end{array} \right) = \left( \begin{array}{c} y'(x) \\ y(x) \end{array} \right)$ for any solution $y$ of $-y'' + V_{\omega}y =  Ey$.  We note that $A$ obeys the differential equation
\begin{align*}
\frac{\partial A_{E,\omega}}{\partial x}(x,x_0) & = B_{E,\omega}(x) A_{E,\omega}(x,x_0) \\
A(x_0,x_0) & = \left( \begin{array}{cc} 1 & 0 \\ 0 & 1 \end{array} \right),
\end{align*}
where $B_{E,\omega}(x)= \left( \begin{array}{cc} 0 & V_{\omega}(x)-E \\ 1 & 0 \end{array} \right)$.  We may note that this yields an elementary bound on the growth of these transfer matrices:
$$
\|A_{E,\omega}(x,x_0)\| \leq \exp \left( \int_{x_0}^x \| B_{E,\omega}(x) \| \, \mathrm{d}x \right) = \exp \left( \int_{x_0}^x \max \left(1,|V_{\omega}(x)-E| \right) \, \mathrm{d}x \right).
$$

Now, for each $n \in \Z_+$, we let $A_{E,\omega}^n$ denote the transfer matrix over the subword $\omega_0 \ldots \omega_{n-1}$ of $\omega$ of length $n$, i.e., $A_{E,\omega}^n = A_{E,\omega}(\ell_{ \omega_0 }+ \cdots + \ell_{\omega_{n-1}},0)$.  One readily sees that the cocycle condition holds:
$$
A_{E,\omega}^{n+m} = A_{E,T^n\omega}^m A_{E,\omega}^n.
$$
In particular, we may invoke Kingman's Subadditive Ergodic Theorem to see the following.

\begin{prop}
For each $E \in \mathbb{R}$, there is a constant $L_{\textup{disc}}(E)$ such that $\lim_{n \to \infty} \frac{1}{n} \log \| A_{E,\omega}^n \| = L_{\textup{disc}}(E)$ for $\mu$ almost every $\omega \in \Omega$.
\end{prop}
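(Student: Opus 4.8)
The plan is to apply Kingman's Subadditive Ergodic Theorem to the sequence of functions $f_n : \Omega \to \R$ defined by $f_n(\omega) = \log \|A_{E,\omega}^n\|$, whose almost sure normalized limit will furnish the desired constant $L_{\textup{disc}}(E)$.

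First I would verify the subadditivity hypothesis. Taking the operator norm in the cocycle identity $A_{E,\omega}^{n+m} = A_{E,T^n\omega}^m A_{E,\omega}^n$ and using submultiplicativity of the norm gives $\|A_{E,\omega}^{n+m}\| \leq \|A_{E,T^n\omega}^m\| \cdot \|A_{E,\omega}^n\|$, so that upon taking logarithms one obtains $f_{n+m}(\omega) \leq f_m(T^n\omega) + f_n(\omega)$, which is precisely the subadditivity condition relative to the measure-preserving transformation $T$.

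Next I would check the integrability hypothesis. Since each $A_{E,\omega}^n$ is unimodular, its singular values $\sigma_1 \geq \sigma_2 > 0$ satisfy $\sigma_1 \sigma_2 = 1$, so $\|A_{E,\omega}^n\| = \sigma_1 \geq 1$ and hence $f_n \geq 0$ everywhere; in particular the negative parts cause no difficulty and the limit cannot equal $-\infty$. For the upper bound I would invoke the elementary transfer-matrix estimate established above, which for $n = 1$ reads
\[
f_1(\omega) \leq \int_0^{\ell_{\omega_0}} \max\bigl(1, |f_{\omega_0}(x) - E|\bigr)\, \mathrm{d}x .
\]
Because the alphabet $\mathcal{A}$ is finite and each $f_a$ lies in $L^2(0,\ell_a) \subset L^1(0,\ell_a)$, the right-hand side is bounded above by the finite constant $\max_{a \in \mathcal{A}} \int_0^{\ell_a} \max(1, |f_a(x) - E|)\, \mathrm{d}x$, uniformly in $\omega$. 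Thus $f_1$ is bounded, hence integrable, so in particular $f_1^+ \in L^1(\Omega, \mu)$.

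With both hypotheses verified, Kingman's theorem yields a $T$-invariant function $f$ with $\frac{1}{n} f_n \to f$ for $\mu$-almost every $\omega$. Since $\mu$ is ergodic, $f$ is almost surely equal to the constant $\inf_n \frac{1}{n}\int_\Omega f_n \, \mathrm{d}\mu$, which we denote $L_{\textup{disc}}(E)$. I do not anticipate any serious obstacle: the only points requiring care are the uniform integrability bound for $f_1$, which rests on the finiteness of the alphabet and the $L^2$-regularity of the potential pieces, and the appeal to ergodicity needed to upgrade the invariant limit to a genuine constant.
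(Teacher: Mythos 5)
Your proposal is correct and follows exactly the route the paper takes: the paper states the cocycle identity and then simply invokes Kingman's Subadditive Ergodic Theorem, and your verification of subadditivity, the integrability of $f_1$ via the elementary transfer-matrix bound, and the use of ergodicity to obtain a constant limit are precisely the details being left implicit there.
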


This discrete cocycle's asymptotic behavior is related to the continuum cocycle's behavior in a natural way.

\begin{prop}
Enumerate $\mathcal{A} = \{a_1,\ldots,a_k\}$ and define $\Omega_j = \{ \omega : \omega_0 = a_j \}$.  Let $s = \ell_{a_1} \mu(\Omega_1) + \cdots + \ell_{a_k} \mu(\Omega_k)$, the weighted average of the lengths.  For almost every $\omega$, the continuum Lyapunov exponent $L(E) = \lim_{x\to\infty} \frac{1}{x} \log\|A_{E,\omega}(x,0)\|$ exists and satisfies $L(E) = \frac{L_{\textup{disc}}(E)}{s}$.
\end{prop}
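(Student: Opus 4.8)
The plan is to relate the continuum transfer matrix $A_{E,\omega}(x,0)$ at an arbitrary point $x$ to the discrete transfer matrices $A_{E,\omega}^n$ by choosing $n=n(x)$ so that the first $n$ letters of $\omega$ fill up an interval of length close to $x$, and then control the discrepancy. Concretely, for $\omega \in \Omega$ and $x>0$, let $n = n(x,\omega)$ be the unique nonnegative integer with $L_n := \ell_{\omega_0} + \cdots + \ell_{\omega_{n-1}} \leq x < \ell_{\omega_0} + \cdots + \ell_{\omega_n} =: L_{n+1}$. By the cocycle property together with the differential-equation description of $A$, one writes $A_{E,\omega}(x,0) = A_{E,\omega}(x,L_n)\,A_{E,\omega}^n$, where the leftover factor $A_{E,\omega}(x,L_n)$ is a transfer matrix over a piece of the single potential block $f_{\omega_n}$ of length $x - L_n < \ell_{\omega_n}$.

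First I would establish that this leftover factor is negligible on the exponential scale. Using the elementary growth bound derived just above the proposition, $\|A_{E,\omega}(x,L_n)\| \leq \exp\!\big(\int_{L_n}^{x} \max(1,|V_\omega(y)-E|)\,\mathrm{d}y\big)$, and since $x-L_n < \ell_{\omega_n} \leq \max_a \ell_a$ while $V_\omega$ restricted to that block is (a shift of) one of the finitely many fixed $L^2$ functions $f_a$, this integral is bounded by a constant $C = C(E)$ independent of $x$ and $\omega$. Submultiplicativity of the operator norm then gives
\[
\big|\,\log\|A_{E,\omega}(x,0)\| - \log\|A_{E,\omega}^n\|\,\big| \leq C(E),
\]
uniformly in $x$, using $\|A^n\| \le \|A(x,L_n)^{-1}\|\,\|A(x,0)\|$ for the reverse inequality (the inverse obeys the same kind of bound since these matrices are unimodular).

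Next I would pin down the relation between $n(x,\omega)$ and $x$ via the ergodic theorem applied to the base $(\Omega,T,\mu)$. The length accumulated by the first $n$ letters is $L_n = \sum_{k=0}^{n-1} \ell_{\omega_k} = \sum_{k=0}^{n-1} \ell(T^k\omega)$, where $\ell(\omega) := \ell_{\omega_0}$ is a bounded measurable function on $\Omega$. Birkhoff's ergodic theorem yields, for $\mu$-a.e.\ $\omega$,
\[
\frac{1}{n} L_n = \frac{1}{n}\sum_{k=0}^{n-1} \ell(T^k\omega) \;\longrightarrow\; \int_\Omega \ell\,\mathrm{d}\mu = \sum_{j=1}^{k} \ell_{a_j}\,\mu(\Omega_j) = s,
\]
so that $L_n/n \to s$, and hence also $L_{n+1}/n \to s$ since $\ell_{\omega_n}/n \to 0$. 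Because $L_n \le x < L_{n+1}$, this forces $x/n(x,\omega) \to s$, equivalently $n(x,\omega)/x \to 1/s$, as $x \to \infty$. (The existence of the a.e.\ limit $L_{\mathrm{disc}}(E)$ from the previous proposition requires only one full-measure set, and the Birkhoff limit another; I intersect the two to work on a single full-measure set.)

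Finally I would assemble the pieces. On the full-measure set where both limits hold, for $\mu$-a.e.\ $\omega$,
\[
\frac{1}{x}\log\|A_{E,\omega}(x,0)\|
= \frac{n}{x}\cdot\frac{1}{n}\log\|A_{E,\omega}^n\| + O\!\Big(\frac{C(E)}{x}\Big)
\;\longrightarrow\; \frac{1}{s}\,L_{\mathrm{disc}}(E),
\]
since $n/x \to 1/s$, the error term vanishes, and $\frac{1}{n}\log\|A_{E,\omega}^n\| \to L_{\mathrm{disc}}(E)$ with $n = n(x,\omega)\to\infty$. This proves that $L(E)$ exists a.e.\ and equals $L_{\mathrm{disc}}(E)/s$, as claimed. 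The main obstacle I anticipate is the careful bookkeeping showing that the single leftover block contributes only an $O(1)$ additive error uniformly in $x$; this hinges on the fact that the block lengths $\ell_a$ and the $L^2$-norms $\|f_a\|$ range over a finite set, so the growth bound on $A_{E,\omega}(x,L_n)$ is genuinely $E$-dependent but $x$- and $\omega$-independent. Everything else is a routine combination of the cocycle structure, submultiplicativity, and the Birkhoff and Kingman ergodic theorems.
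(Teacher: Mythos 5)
Your proposal is correct and follows essentially the same route as the paper: decompose $A_{E,\omega}(x,0)$ at the nearest block boundary $L_n$, use Birkhoff's ergodic theorem applied to $\omega\mapsto\ell_{\omega_0}$ to get $x/n\to s$, and show the leftover single-block factor contributes only a bounded additive error to $\log\|A_{E,\omega}(x,0)\|$. Your write-up is in fact slightly more explicit than the paper's about why that leftover factor is $O(1)$ uniformly in $x$ and $\omega$ (finiteness of the alphabet plus the a priori growth bound), which the paper leaves implicit in the line ``as $x$ goes to infinity, these estimates go to zero.''
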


\begin{proof}
Consider $f:\Omega \to \mathbb{R}$ given by $f(\omega) = \ell_{\omega_0}$ Then,
$$
\frac{1}{N} \sum_{n = 0}^{N-1} f(T^n\omega) = \frac{\ell_{\omega_0} + \cdots + \ell_{\omega_{N-1}}}{N}.
$$
Hence, by Birkhoff's ergodic theorem, we have
$$
\lim_{n \to \infty} \frac{\ell_{\omega_0} + \cdots + \ell_{\omega_{n-1}}}{n} = \mathbb{E}(f) = s
$$
for $\mu$ almost every $\omega \in \Omega$.

Since $\lim_{n \to \infty} \frac{1}{n} \log \left\| A_{E,\omega}^n \right\|$ and $\lim_{n \to \infty} \frac{\ell_{\omega_0} + \cdots + \ell_{\omega_{n-1}}}{n}$ exist for $\mu$ almost every $\omega$, we have that
$$
\lim_{n \to \infty} \frac{1}{\ell_{\omega_0} + \cdots + \ell_{\omega_{n-1}}} \log \left\| A_{E,\omega}^n \right\|  = \frac{L_{\textup{disc}}(E)}{s}.
$$
Thus, we see that the limit defining the continuum Lyapunov exponent exists and equals $s^{-1} L_{\textup{disc}}(E)$ along a subsequence.

Next, we want to estimate the error in approximating the matrix $A_{E,\omega}(x,0)$ with a nearby point $A_{E,\omega}(x_n,0)$, where $x_n= \ell_{\omega_0} + \cdots + \ell_{\omega_{n-1}}$.  To that end, assume given $x > 0$ and choose $n$ with $x_n \leq x < x_{n+1}$. We observe the following estimates:
$$
\frac{1}{x} \log\|A_{E,\omega}(x,0)\| - \frac{1}{x_n} \log \|A_{E,\omega}(x_n,0)\| \leq \frac{1}{x_n}\log\|A_{E,\omega}(x,x_n) \|
$$
and
\begin{align*}
\frac{1}{x_n} \log\|A_{E,\omega}(x_n,0)\| & - \frac{1}{x} \log \|A_{E,\omega}(x,0)\| \\
& \leq \left(\frac{1}{x_n} - \frac{1}{x} \right) \log \|A_{E,\omega}(x,0) \| + \frac{1}{x_n} \log\|A_{E,\omega} (x_n,x)\|.
\end{align*}

As $x$ goes to infinity, these estimates go to zero.  In particular, there is a full measure set of $\omega$ for which $L(E)$ exists and is equal to $L_{\textup{disc}}(E)/s$
\end{proof}

With this relationship established, we may consider $\mathcal{Z} = \{E : L(E) = 0\} = \{E:L_{\textup{disc}}  = 0 \}$.  The celebrated theorem of Ishii-Pastur-Kotani extends naturally to our setting:

\begin{prop}
$\Sigma_{\textup{ac}} = \overline{\mathcal{Z}}^{\textup{ess}}$
\end{prop}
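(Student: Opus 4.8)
\section*{Proof proposal}

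The plan is to establish the two inclusions $\Sigma_{\textup{ac}} \subseteq \overline{\mathcal{Z}}^{\textup{ess}}$ (the Ishii--Pastur half) and $\overline{\mathcal{Z}}^{\textup{ess}} \subseteq \Sigma_{\textup{ac}}$ (the Kotani half) separately, adapting the by-now-standard machinery of ergodic Kotani theory to the present continuum setting. The transfer-matrix bound $\|A_{E,\omega}(x,x_0)\| \le \exp\!\left(\int_{x_0}^x \max(1,|V_\omega - E|)\right)$ derived above, together with the integrability of the local pieces (each $f_a \in L^2(0,\ell_a)$), supplies exactly the moment control needed to run the usual arguments. In particular it guarantees that the half-line Weyl--Titchmarsh $m$-functions $m_\pm(z,\omega)$ are well defined for $\Im z > 0$ and that the relevant subharmonicity and integrability statements continue to hold despite the merely $L^2_{\textup{loc,unif}}$ regularity of the potentials.

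For the Ishii--Pastur inclusion I would argue on the complement of $\overline{\mathcal{Z}}^{\textup{ess}}$. By definition of the essential closure, for Lebesgue-a.e.\ $E \notin \overline{\mathcal{Z}}^{\textup{ess}}$ one has $L(E) > 0$. Fixing such an $E$ and applying Oseledec's multiplicative ergodic theorem to the continuum cocycle $A_{E,\omega}(x,0)$, one obtains for $\mu$-a.e.\ $\omega$ a one-dimensional space of initial data giving rise to solutions of $-y'' + V_\omega y = Ey$ that decay like $e^{-L(E)x}$ at $+\infty$, and likewise at $-\infty$. These are subordinate solutions, so by the continuum Gilbert--Pearson subordinacy theory the absolutely continuous part $\mu_\omega^{\textup{ac}}$ assigns zero weight to $\{E : L(E) > 0\}$. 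A Fubini argument then shows that the (nonrandom) essential support of $\mu_\omega^{\textup{ac}}$ is contained, up to a Lebesgue-null set, in $\mathcal{Z}$; taking the essential closure yields $\Sigma_{\textup{ac}} \subseteq \overline{\mathcal{Z}}^{\textup{ess}}$.

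The reverse (Kotani) inclusion is the deep part. Here I would invoke the Herglotz structure of the $m$-functions together with the Thouless-type formula relating $L$ to the averaged real part of $m_+$, extending the Lyapunov exponent to a subharmonic function $z \mapsto L(z)$ on the upper half-plane. The core of Kotani theory then shows that for Lebesgue-a.e.\ $E \in \mathcal{Z}$ the nontangential boundary values $m_\pm(E + i0, \omega)$ exist, are non-real, and satisfy the reflectionless relation $m_+(E+i0,\omega) = -\overline{m_-(E+i0,\omega)}$ for $\mu$-a.e.\ $\omega$. In particular $\Im m_+(E+i0,\omega) > 0$ on a full-measure subset $S \subseteq \mathcal{Z}$, which exhibits $S$ as an essential support of $\mu_\omega^{\textup{ac}}$; hence $\overline{\mathcal{Z}}^{\textup{ess}} = \overline{S}^{\textup{ess}} \subseteq \Sigma_{\textup{ac}}$, completing the proof.

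I expect the main obstacle to be precisely this last step---establishing the reflectionless/positivity property for a.e.\ $E \in \{L = 0\}$. The delicate point is controlling the boundary behavior of $L(z)$ and of $m_\pm(z,\omega)$ as $\Im z \downarrow 0$, which requires the subharmonicity argument to be carried out with the weak regularity of the potentials; one must verify that the exponential transfer-matrix bound is strong enough to justify the uniform integrability needed in the limiting procedure, and that the ergodicity of $(\Omega, T, \mu)$ transfers correctly to the $\R$-action on the suspension $\tilde\Omega$ (as set up in the proof of Lemma~\ref{isolated}) so that the reflectionless identity is available for the continuum family.
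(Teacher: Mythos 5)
Your outline correctly identifies the classical Ishii--Pastur--Kotani dichotomy as the engine, but as written it defers precisely the two points that constitute the actual proof, and one of them is model-specific and cannot be waved through. The family $\{H_\omega\}_{\omega\in\Omega}$ is covariant only under the $\Z$-action $T$, and the conjugating unitary is translation by the \emph{variable} length $\ell_{\omega_0}$; the continuum Ishii--Pastur--Kotani theorem (as in \cite{CL, CFKS}) requires an ergodic family of potentials under the full translation group $(S_r)_{r\in\R}$. The paper's entire proof consists of supplying this: it forms the suspension $\tilde\Omega=\Omega\times[0,1)$, pushes $\mu\times\mathrm{Leb}$ forward to a measure $\tilde\nu$ on the space of potentials, proves that $\tilde\nu$ is $(S_r)$-ergodic via Kirsch's sectioning argument from \cite{K} (any invariant set $A$ has sections $A_t$ that are $T$-invariant and independent of $t$, whence $\tilde\nu(A)=\mu(A_0)\in\{0,1\}$), and then verifies that the almost-sure Lyapunov exponent $\tilde L(E)$ of the suspension coincides with $L(E)$ by intersecting the relevant full-measure sets of potentials. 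You mention the suspension only in a closing caveat as something that ``must be verified''; without it, neither the Fubini argument in your Ishii--Pastur half nor the reflectionless identity in your Kotani half has an ergodic $\R$-action to run over, so this is not a technicality but the missing core of the proof.

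Conversely, the part you spend most effort sketching --- Oseledec plus subordinacy for one inclusion, $m$-function boundary values and the reflectionless relation for the other --- is exactly the content of the theorem being cited, and you explicitly leave its hardest step (the reflectionless property for a.e.\ $E\in\mathcal{Z}$) as an unresolved ``main obstacle.'' Re-deriving Kotani theory is unnecessary here: once the ergodic $\R$-flow and the identification $\tilde L=L$ are in place, the conclusion $\tilde\Sigma_{\textup{ac}}=\overline{\tilde L^{-1}(0)}^{\textup{ess}}$ follows by citation, and $\tilde\Sigma_{\textup{ac}}=\Sigma_{\textup{ac}}$ because $H_{\omega,t}$ is unitarily equivalent to $H_\omega$. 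In short, the proposal inverts the division of labor: it sketches what can be quoted and quotes what must be proved.
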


\begin{proof}
Let $\nu,\tilde{\Omega},\tilde{\mu}, X, \tilde{\nu},$ and $V_{\omega,t}$ be defined as in the proof of Lemma~\ref{isolated}. We will show that $\tilde\nu$ is ergodic with respect to the family of maps $\{S_r:r\in\R\}$.

Invariance is clear. To prove ergodicity, we follow Kirsch's argument from \cite{K}. Assume that $A$ is invariant under $(S_r)_{r\in\R}$.  For each $t$, we may consider the section $A_t = \{ \omega \in \Omega : V_{\omega,t}\in A \}$.  Notice that
$$
T^{-1}[A_t] = \left\{\omega\in\Omega : V_{T\omega,t} \in A \right\} =  \left\{\omega \in \Omega : V_{\omega,t} \in S_{\ell_{\omega_0}}[A]=A \right\} = A_t
$$
for each $t$, so that $\mu(A_t) = 0$ or 1 for each $t$ by ergodicity of $(\Omega,\mu,T)$.  Moreover, we have
$$
A_t = \{ \omega \in\Omega: V_{\omega,t} \in A \} = \left\{ \omega \in\Omega: V_{\omega,0} \in S_{t\ell_{\omega_0}}[A]=A \right\}  = A_0
$$
for all $t$.  We then observe that $\displaystyle \tilde{\nu} (A) = \tilde{\mu} (\Psi^{-1}[A]) = \int_0^1 \! \mu (A_t) \, \mathrm{d}t = \mu(A_0) $ and hence $\tilde{\nu} (A)$ is 0 or 1. It follows that $\tilde{\nu}$ is ergodic.

We can define the Lyapunov exponent on $\tilde{\Omega}$ by $L(E,\omega,t) = \lim_{x\to\infty}\frac{1}{x} \log\|A_{E,(\omega,t)} (x,0)\| $, where $A_{E,(\omega,t)}(x,0)$ is the natural $\mathrm{SL}(2, \R)$ cocycle associated to $H_{\omega,t} = -\Delta + V_{\omega,t}$.  Standard arguments show that $L(E,\omega,t)$ exists for $\tilde{\mu}$ almost every $(\omega,t)$ and is $\tilde{\mu}$ almost surely equal to a constant $\tilde{L}(E)$.  One sees that $A_{E,(\omega,t)}(x,0) = A_{E,\omega}(x+t,t)$.

Let $S_1$ denote the full measure set of $V_{\omega,t}$ for which
$$
\lim_{x\to \infty}\frac{1}{x}\log\|A_{E,{\omega,t}}(x,0)\| = \tilde{L}(E).
$$

Next, let $S_2$ consist of those $V_{\omega}$ such that $V_{\omega,t}\in S_1$ for some $t$ (this set has full $\nu$ measure).  Next, let $S_3$ consist of those $V_{\omega}$ for which
$$
\lim_{x\to \infty}\frac{1}{x}\log\|A_{E,\omega}(x,0)\| = L(E).
$$
Since $S_2$ and $S_3$ have full $\nu$ measure, we may choose $V_{\omega }\in S_2\cap S_3$ and then $t$ such that $V_{\omega,t}\in S_1$. But then, we have
$$
L(E) = \lim_{x\to \infty}\frac{1}{x}\log\|A_{E,\omega}(x,0)\|  = \lim_{x\to \infty}\frac{1}{x}\log\|A_{E,(\omega,t)}(x,0)\| = \tilde{L}(E).
$$

By standard arguments, there is some nonrandom set $\tilde{\Sigma}_{\textup{ac}}$ such that  $\tilde{\Sigma}_{\textup{ac}} = \sigma_{\textup{ac}}(H_{\omega,t})$ for $\tilde{\mu}$ almost every $(\omega,t)$.  Since $H_{\omega,t}$ is unitarily equivalent to $H_{\omega}$, it follows that $\tilde{\Sigma}_{\textup{ac}} = \Sigma_{\textup{ac}}$.  Moreover, the Ishii-Pastur-Kotani theorem \cite{CL, CFKS} implies that $\tilde{\Sigma}_{\textup{ac}} = \overline{\tilde{L}^{-1}(0)}^{\textup{ess}} = \overline{\mathcal{Z}}^{\textup{ess}}$.  Thus, $\Sigma_{\textup{ac}} = \overline{\mathcal{Z}}^{\textup{ess}}$.
\end{proof}

\section{Absence of Absolutely Continuous Spectrum}\label{sec:4}

Next, we would like to prove that the absolutely continuous spectrum $\Sigma_{\textup{ac}} $ is empty for aperiodic models.  However, there is a slight complication -  even if $(\Omega,T)$ is aperiodic, it does not necessarily follow that the potentials are aperiodic.  For example, we could have $f_{a}(x) = f_b(x) = \sin(x)$, $\ell_a = 2\pi$, $\ell_b = 4\pi$.  In this case, an aperiodic word containing only $a$'s and $b$'s will give rise to the periodic potential $V(x) = \sin(x)$.  To remove degenerate potentials such as this, we must impose the following addditional assumptions:

\begin{itemize}

\item Aperiodicity: The subshift $\Omega$ and the collection $\{f_a : a \in \mathcal{A}\}$ are such that the potentials $V_{\omega}$ are not periodic.

\item Irreducibility: There is some $\ell > 0$ such that the following holds.  Suppose two ``pieces'' of potential satisfy $\chi_{[0,\ell)} \left( \boxed{f_{a_1}} | f_{a_2} | \cdots | f_{a_k} \right) = \chi_{[0,\ell)} \left( \boxed{f_{b_1}} | f_{b_2} | \cdots | f_{b_n} \right)$ (Note that this requires $\ell < \ell_{a_1} + \cdots \ell_{a_k},\ell_{b_1} + \cdots \ell_{b_n}$).  Then $a_1 = b_1$.  In some sense, the potential has a unique decomposition into a concatenation of the finite pieces $\{f_a : a \in\mathcal{A} \}$.

\end{itemize}

Notice that aperiodicity need not imply irreducibility.  To see this, consider $f_a(x) = f_b(x) = \sin(x)$,  $f_c(x) = \sin(x-\pi)$, $\ell_a = 2\pi$, $\ell_b = \ell_c = \pi$.

Under these conditions, the potentials satisfy the simple finite decomposition property, as described in \cite{KLS}. This enables us to prove the following.

\begin{prop}
$\Sigma_{\textup{ac}} = \emptyset$
\end{prop}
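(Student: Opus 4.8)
The plan is to combine the Ishii--Pastur--Kotani theorem established in the preceding proposition with Kotani theory for ergodic families and the simple finite decomposition property. That proposition gives $\Sigma_{\textup{ac}} = \overline{\mathcal{Z}}^{\textup{ess}}$, where $\mathcal{Z} = \{E : L(E) = 0\}$. Since the essential closure of a set of zero Lebesgue measure is empty, it suffices to show that $\mathcal{Z}$ is Lebesgue-null; equivalently, that $L(E) > 0$ for Lebesgue-almost every $E \in \R$. I would therefore reduce the statement to positivity of the Lyapunov exponent almost everywhere. (A more economical route simply verifies the hypotheses of the absence-of-a.c.-spectrum theorem of \cite{KLS} and applies it directly; but to expose the positivity of the Lyapunov exponent that is needed in the sequel, I prefer to argue through $\mathcal{Z}$.)

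To prove this positivity I would argue by contradiction, supposing $|\mathcal{Z}| > 0$. The continuum version of Kotani theory then applies to the genuinely $\R$-ergodic family $\{H_{\omega,t}\}$ built in the proof of Lemma~\ref{isolated}, for which $\tilde{\nu}$ was shown to be ergodic under $\{S_r\}_{r\in\R}$. It yields that the potential is \emph{deterministic}: for $\tilde{\mu}$-almost every realization, the restriction of the potential to the left half-line $(-\infty,0]$ determines its restriction to $[0,\infty)$. This is precisely the step at which the ergodic form of Kotani theory enters, in place of Remling's non-ergodic version, and it is available exactly because we are working with the honest $\R$-action rather than a deterministic single orbit.

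The crux is to convert this analytic determinism into a combinatorial statement that contradicts aperiodicity, and this is where the simple finite decomposition property supplied by \cite{KLS} is used. Under aperiodicity and irreducibility the potential admits a unique decomposition into the finitely many building blocks $\{f_a : a \in \mathcal{A}\}$, so that knowledge of $V_\omega$ on a half-line is equivalent to knowledge of the symbolic sequence $\omega$ on the corresponding half-line. Hence determinism of the potential transfers to determinism of the underlying finite-alphabet subshift. Invoking the finite-valued incarnation of Kotani's theorem (as realized in the continuum by \cite{KLS}; compare the discrete results of Kotani--Simon and Damanik--Lenz), a deterministic ergodic process over a finite alphabet must be periodic, which contradicts the aperiodicity hypothesis. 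Therefore $|\mathcal{Z}| = 0$, and consequently $\Sigma_{\textup{ac}} = \overline{\mathcal{Z}}^{\textup{ess}} = \emptyset$.

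I expect the main obstacle to be this middle passage: carefully invoking the continuum ergodic Kotani theorem to extract determinism and then, via the finite decomposition property, reducing potential-level determinism to symbol-level determinism so that the finite-alphabet periodicity argument applies. The irreducibility hypothesis is doing essential work here. Without uniqueness of the decomposition, a deterministic potential need not correspond to a deterministic symbolic sequence, exactly as the degenerate examples with $\sin$ pieces preceding the statement illustrate, and then the contradiction with aperiodicity would collapse.
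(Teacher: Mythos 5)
Your overall strategy is legitimate and is in fact the classical discrete-case route (Kotani determinism plus finitely many values implies periodicity), but it is \emph{not} the paper's proof, and as written it has a load-bearing step that is asserted rather than available. The paper's argument is much shorter and runs in the opposite logical direction: it invokes \cite[Theorem~4.1]{KLS} -- a Remling-type, non-ergodic result -- which says that if a half-line restriction of $H_\omega$ has nonempty absolutely continuous spectrum, then its potential is eventually periodic; since aperiodicity rules this out and the whole-line a.c.\ spectrum is the union of the half-line a.c.\ spectra, $\Sigma_{\textup{ac}} = \emptyset$ follows for every $\omega$, with no use of the Lyapunov exponent at all. Only afterwards does the paper feed this into the Ishii--Pastur--Kotani identity to \emph{deduce} $|\mathcal{Z}| = 0$ (Corollary~\ref{no.ac}). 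You instead try to prove $|\mathcal{Z}| = 0$ first and then read off $\Sigma_{\textup{ac}} = \overline{\mathcal{Z}}^{\textup{ess}} = \emptyset$.

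The gap is in your middle passage. You attribute the ``finite-valued incarnation of Kotani's theorem'' in the continuum to \cite{KLS}, but that paper contains no such result -- as the introduction of the present paper points out, \cite{KLS} deliberately \emph{replaces} Kotani theory with Remling's oracle theorem. So the two ingredients you need, namely (a) the determinism statement of continuum Kotani theory for the suspension family $\{H_{\omega,t}\}$ with potentials merely in $L^2_{\mathrm{loc,unif}}$, and (b) the upgrade from measurable potential-level determinism to pointwise symbol-level determinism on the support (via irreducibility) followed by the Morse--Hedlund-type periodicity conclusion, are exactly the parts you flag as ``the main obstacle'' and do not supply, and they cannot be outsourced to the cited reference. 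They are provable, and your instinct about where irreducibility enters is correct, but carrying them out is a genuine project; the paper's choice to cite the Remling-based half-line theorem and then recover the Lyapunov-exponent statement as a corollary of the IPK identity is precisely how it sidesteps that work. If you want to keep your route, you must either prove the continuum finite-valued Kotani theorem yourself or find a correct reference for it.
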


\begin{proof}
By \cite[Theorem~4.1]{KLS}, if the restriction of $H_{\omega}$ to either half line has nonempty absolutely continuous spectrum, then said half-line restriction must have a potential which is eventually periodic.  As the potentials under consideration are not eventually periodic, it follows that both half-line restrictions have empty absolutely continuous spectrum.  By general considerations, the absolutely continuous spectrum of the whole-line operator $H_{\omega}$ is equal to the union of the absolutely continuous spectra of its half line restrictions.  Hence, $\Sigma_{\textup{ac}}=\emptyset$, as desired.
\end{proof}

By general measure-theoretic considerations, this immediately implies

\begin{coro}\label{no.ac}
The Lebesgue measure of $\mathcal{Z}$ is zero.
\end{coro}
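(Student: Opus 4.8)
The plan is to combine the two facts that immediately precede the corollary. The Proposition just established gives $\Sigma_{\textup{ac}} = \emptyset$, while the Ishii--Pastur--Kotani proposition of Section~\ref{sec:3} identifies $\Sigma_{\textup{ac}}$ with the essential closure $\overline{\mathcal{Z}}^{\textup{ess}}$. Chaining these two equalities yields $\overline{\mathcal{Z}}^{\textup{ess}} = \emptyset$, so the entire matter reduces to a purely measure-theoretic statement: a measurable set with empty essential closure is Lebesgue null.

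First I would recall the relevant definition. For a measurable set $S \subseteq \R$, the essential closure is $\overline{S}^{\textup{ess}} = \{ x \in \R : |S \cap (x-\varepsilon,x+\varepsilon)| > 0 \text{ for every } \varepsilon > 0 \}$, where $|\cdot|$ denotes Lebesgue measure. It is worth noting in passing that $\mathcal{Z}$ is indeed measurable: by Kingman's theorem $L_{\textup{disc}}(E) = \inf_n \tfrac{1}{n}\mathbb{E}(\log\|A^n_{E,\omega}\|)$ is an infimum of functions that are continuous in $E$, hence upper semicontinuous, and the same holds for $L = L_{\textup{disc}}/s$. Since $L \geq 0$ always (as $\|A\| \geq 1$ for $A \in \mathrm{SL}(2,\R)$), we have $\mathcal{Z} = \{E : L(E) \leq 0\}$, a $G_\delta$ set.

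The only genuine content is the standard fact that a measurable set agrees with its essential closure up to a null set, that is, $|S \setminus \overline{S}^{\textup{ess}}| = 0$. To prove this I would observe that $x \notin \overline{S}^{\textup{ess}}$ precisely when $x$ lies in some open interval $J$ with $|S \cap J| = 0$, and any such $J$ may be shrunk to one with rational endpoints. Hence the complement of $\overline{S}^{\textup{ess}}$ equals the union of the countably many rational intervals $(p,q)$ satisfying $|S \cap (p,q)| = 0$. Each of these contributes a null portion of $S$, so their union meets $S$ in a countable union of null sets, giving $|S \setminus \overline{S}^{\textup{ess}}| = 0$.

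Applying this with $S = \mathcal{Z}$ and invoking $\overline{\mathcal{Z}}^{\textup{ess}} = \emptyset$ yields $|\mathcal{Z}| = |\mathcal{Z} \setminus \overline{\mathcal{Z}}^{\textup{ess}}| = 0$, as claimed. There is no serious obstacle at this stage, since all the analytic work was already carried out in proving the absence of absolutely continuous spectrum and the Ishii--Pastur--Kotani identity; the single point requiring a little care is the Lindel\"of-type covering argument showing that a set and its essential closure differ only by a Lebesgue null set.
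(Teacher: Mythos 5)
Your proposal is correct and takes essentially the same route as the paper: both chain $\Sigma_{\textup{ac}} = \emptyset$ with the Ishii--Pastur--Kotani identity $\Sigma_{\textup{ac}} = \overline{\mathcal{Z}}^{\textup{ess}}$ to get $\overline{\mathcal{Z}}^{\textup{ess}} = \emptyset$, and then reduce to the fact that a set with empty essential closure is Lebesgue null. The only (immaterial) difference is in that last step, where you prove the slightly more general statement $|S \setminus \overline{S}^{\textup{ess}}| = 0$ via a countable cover by rational intervals, while the paper covers a compact interval by finitely many intervals of null intersection using compactness; both are fine.
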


\begin{proof}
Suppose  $\overline{S}^{\textup{ess}} = \emptyset$.  Then, given a compact interval $I$, since the essential closure of $S$ is empty, $I$ is covered by finitely many open intervals $J_1,\ldots,J_m$ such that $|J_k \cap S| = 0$ for $1 \leq k \leq m$.  But this implies $|S \cap I|=0$ for all compact intervals $I$ and hence $|S| = 0$.  In particular, the essential closure of $\mathcal{Z}$ is empty, so $\mathcal{Z}$ must have zero Lebesgue measure.
\end{proof}

\section{Absence of Non-Uniform Hyperbolicity and Cantor Spectrum}\label{sec:5}

In this section we prove our main result in the general setting, namely a sufficient condition for zero-measure Cantor spectrum.

\begin{defi}
Let $A_{E,\omega}(x,0)$ be as above.  We say that $E \in \mathcal{UH}$ if there are constants $C,\gamma>0$ such that $\|A_{E,\omega}(x,0)\| \geq Ce^{\gamma|x|}$ for every $x \in \R$.  That is, the transfer matrices grow exponentially in $x$ and uniformly on $\Omega$.
\end{defi}

Uniform hyperbolicity has several equivalent formulations; compare, for example, \cite{Y04}. One of them is used in the proof of the following proposition.

\begin{prop}\label{p.sincompuh}
$\Sigma \subset \R \backslash \mathcal{UH}$
\end{prop}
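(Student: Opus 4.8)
The plan is to prove the contrapositive: I want to show that if $E \in \mathcal{UH}$, then $E \notin \Sigma$. Equivalently, uniform hyperbolicity of the transfer matrix cocycle at energy $E$ forces $E$ into the resolvent set of $H_\omega$ for every (or almost every) $\omega$. The natural tool here is the equivalence between uniform hyperbolicity and the existence of an exponential dichotomy for the associated differential equation $-y'' + V_\omega y = E y$. Uniform hyperbolicity of $A_{E,\omega}(x,0)$ gives, at each point, a splitting of $\R^2$ into a stable direction (solutions decaying as $x \to +\infty$) and an unstable direction (solutions decaying as $x \to -\infty$), with uniform exponential rates. This is precisely the dichotomy property one needs.

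The key steps, in order, would be as follows. First, I would invoke the equivalence (referenced via \cite{Y04}) between the uniform hyperbolicity of $E$ and the existence of invariant stable/unstable subbundles along the cocycle, uniformly over $\omega \in \Omega$. Second, I would translate this into the statement that for each $\omega$, the equation $-y'' + V_\omega y = E y$ admits no nontrivial $L^2(\R)$ solution: a stable solution decays exponentially at $+\infty$ but grows exponentially at $-\infty$ (and vice versa for the unstable solution), and since the stable and unstable directions are transverse, no single solution can lie in $L^2$ at both ends. Third, from the absence of $L^2$ solutions together with the exponential dichotomy, I would construct the resolvent $(H_\omega - E)^{-1}$ explicitly via a Green's function built from one decaying solution at each end; the uniform exponential decay rate $\gamma$ ensures the Green's kernel is bounded and defines a bounded operator. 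This exhibits $E \in \rho(H_\omega)$, hence $E \notin \sigma(H_\omega) = \Sigma$.

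I expect the main obstacle to be making the passage from the discrete/continuum cocycle growth estimate to a genuine \emph{exponential dichotomy} fully rigorous, rather than merely a statement about norm growth of $A_{E,\omega}(x,0)$. The definition given controls $\|A_{E,\omega}(x,0)\|$ from below, but to build the Green's function I need the sharper structural statement that solution space splits into uniformly exponentially decaying and growing subspaces at each point, with the decay rates controlled on \emph{both} half-lines. This is exactly the content of the equivalent characterization alluded to after the definition, so citing the correct formulation from \cite{Y04} (or a standard reference on exponential dichotomies) and checking that the continuum transfer matrix cocycle satisfies its hypotheses is where the real care is needed. A secondary point to handle cleanly is uniformity in $\omega$: since the lower bound $\|A_{E,\omega}(x,0)\| \geq C e^{\gamma|x|}$ holds with constants independent of $\omega$, the dichotomy and hence the bound on the Green's function are uniform, which is what lets us conclude $E \notin \Sigma$ for the common spectrum rather than merely for almost every $\omega$.
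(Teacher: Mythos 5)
Your proposal takes essentially the same route as the paper: the paper's proof also passes from $E \in \mathcal{UH}$ to the existence of linearly independent solutions $u_\pm$ of $-u'' + V_\omega u = Eu$ with $L^2$ decay at $\pm\infty$ (via the equivalent dichotomy formulation of uniform hyperbolicity referenced to \cite{Y04}), and then exhibits the Green's function built from $u_-$ and $u_+$ as the integral kernel of $(H_\omega - E)^{-1}$ to conclude $E \notin \Sigma$. The point you flag as needing care --- upgrading the norm lower bound to a genuine stable/unstable splitting --- is exactly the step the paper handles by appeal to the equivalent formulations of uniform hyperbolicity.
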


\begin{proof}
Suppose $E \in \mathcal{UH}$. Then, for every $\omega \in \Omega$, there exist linearly independent solutions $u_{\pm}$ of the equation $-u'' + V_\omega u = Eu$ with $L^2$ decay at $\pm \infty$.  A standard argument then shows that
\[ G_\omega(E,x,y) = \frac{u_{-}(\min(x,y)) u_{+}(\max(x,y))}{u_{-}'(x)u_{+}(x) - u_{-}(x)u_{+}'(x)} \]
defines the integral kernel of $(H_\omega - E)^{-1}$.  In particular, $E \notin \Sigma$.
\end{proof}

Recall the following definition of condition (B) for a minimal subshift, introduced by Boshernitzan in \cite{B1}.

\begin{defi}
Let $\Omega$ be a minimal subshift. It satisfies the Boshernitzan condition {\rm (B)} if there exists a $T$-ergodic measure $\mu$ such that
$$
\limsup_{n \to \infty} \left( \min_{w \in \mathcal{W}_\Omega(n)} n \cdot \mu \left( [w] \right) \right) > 0.
$$
Here, $[w]$ denotes the cylinder set determined by a finite word $w$ and $\mathcal{W}_\Omega(n)$ denotes the set of words of length $n$ that occur in elements of $\Omega$.
\end{defi}

We remark that condition (B) implies unique ergodicity of $(\Omega,T)$ \cite{B3}. All subshifts generated by primitive substitutions (this includes in particular the case of the Fibonacci substitution considered later in the paper) and, more generally, all linearly recurrent subshifts satisfy (B). See \cite{DL06b} for many more examples of subshifts satisfying (B).

\begin{prop}\label{p.zeqcompuh}
Suppose $(\Omega,T)$ is a minimal subshift which satisfies {\rm (B)}.  Then $\mathcal{Z} = \R \backslash \mathcal{UH}$.
\end{prop}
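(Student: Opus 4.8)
The plan is to prove the two inclusions separately. The inclusion $\mathcal{Z} \subseteq \R \backslash \mathcal{UH}$ is immediate: if $E \in \mathcal{UH}$, then by definition $\|A_{E,\omega}(x,0)\| \geq Ce^{\gamma|x|}$ for all $x$ and all $\omega$, so $L(E) = \lim_{x \to \infty} \frac{1}{x}\log\|A_{E,\omega}(x,0)\| \geq \gamma > 0$, whence $E \notin \mathcal{Z}$. The substance of the proposition is the reverse inclusion $\R \backslash \mathcal{UH} \subseteq \mathcal{Z}$, equivalently the assertion that $L(E) > 0$ forces $E \in \mathcal{UH}$. This is the statement that non-uniform hyperbolicity cannot occur, and it is here that condition (B) enters.

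I would argue the reverse inclusion by contraposition, assuming $E \notin \mathcal{Z}$, so that $L(E) > 0$ and hence $L_{\textup{disc}}(E) = s\, L(E) > 0$ by the relation established in Section~\ref{sec:3}. The crucial input is the theorem of Damanik and Lenz \cite{DL06b} on uniform convergence in the subadditive ergodic theorem: for a minimal subshift satisfying (B), the subadditive cocycle $\omega \mapsto \log\|A_{E,\omega}^n\|$ (subadditive by the cocycle relation and submultiplicativity of the norm) converges uniformly, i.e.\ $\frac{1}{n}\log\|A_{E,\omega}^n\| \to L_{\textup{disc}}(E)$ uniformly in $\omega \in \Omega$. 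Here one uses that $A_{E,\omega}^n$ depends only on the word $\omega_0 \cdots \omega_{n-1}$, so the cocycle is locally constant and the theorem applies. Combined with positivity of $L_{\textup{disc}}(E)$, uniform convergence yields an integer $N$ and a number $\lambda > 1$ with $\|A_{E,\omega}^n\| \geq \lambda^n$ for all $\omega \in \Omega$ and all $n \geq N$.

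Uniform exponential growth of the norm of an $\mathrm{SL}(2,\R)$ cocycle over a minimal base is one of the standard equivalent characterizations of uniform hyperbolicity; see \cite{Y04}. Thus the discrete cocycle is uniformly hyperbolic, giving $\|A_{E,\omega}^n\| \geq C_0 e^{\gamma_0 n}$ for all $\omega$ and $n \geq 0$ with uniform constants $C_0, \gamma_0 > 0$. It then remains to pass from the discrete transfer matrices, evaluated at the marked points $x_n = \ell_{\omega_0} + \cdots + \ell_{\omega_{n-1}}$, to the continuum transfer matrices $A_{E,\omega}(x,0)$ for arbitrary real $x$. Because the alphabet is finite and each $f_a \in L^2(0,\ell_a)$, the elementary bound from Section~\ref{sec:3} provides a uniform constant $M$ with $\|A_{E,\omega}(x,y)^{\pm 1}\| \leq M$ whenever $|x - y| \leq \max_a \ell_a$. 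Writing $A_{E,\omega}^n = A_{E,\omega}(x,x_n)^{-1} A_{E,\omega}(x,0)$ for the $n$ with $x_n \leq x < x_{n+1}$ gives $\|A_{E,\omega}(x,0)\| \geq M^{-1}\|A_{E,\omega}^n\| \geq M^{-1} C_0 e^{\gamma_0 n}$, and since $x < x_{n+1} \leq (n+1)\max_a \ell_a$ this converts the discrete bound into $\|A_{E,\omega}(x,0)\| \geq C e^{\gamma x}$ uniformly in $\omega$ and $x \geq 0$; the estimate for $x < 0$ is analogous. This gives $E \in \mathcal{UH}$, as desired.

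The \textbf{main obstacle} is the step upgrading positivity of the Lyapunov exponent to uniform hyperbolicity, and this is exactly where (B) is indispensable: for general ergodic families one may have $L(E) > 0$ without uniform hyperbolicity, and the Damanik--Lenz uniform convergence theorem is precisely the tool that excludes this for subshifts satisfying (B). The remaining ingredients—the equivalence of uniform norm growth with uniform hyperbolicity for $\mathrm{SL}(2,\R)$ cocycles, and the interpolation between marked points—are standard, the only point requiring care being that all constants in the interpolation are chosen uniformly in $\omega$, which is guaranteed by the finiteness of the alphabet together with the local $L^2$ bound on the pieces.
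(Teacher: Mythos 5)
Your argument is correct and follows the same route as the paper: the Damanik--Lenz uniform convergence theorem for locally constant $\mathrm{SL}(2,\R)$ cocycles over subshifts satisfying (B) upgrades $L_{\textup{disc}}(E)>0$ to a uniform exponential lower bound on $\|A_{E,\omega}^n\|$, which is then transferred to the continuum cocycle by interpolation between the marked points $x_n$ (the paper states this in one line, while you supply the details, including the uniform bound on short-interval transfer matrices and the $x<0$ direction). The only slip is bibliographic: the uniform convergence theorem is \cite[Theorem~1]{DL}, not \cite{DL06b}.
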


\begin{proof}
By \cite[Theorem~1]{DL}, it follows that $\frac{1}{n} \log \|A_{E,\omega}^n\|$ converges to $L_{\textup{disc}}(E)$ uniformly on $\Omega$.  From this and the estimates in Section~3, it follows that $\frac{1}{x} \log \|A_{E,\omega}(x,0) \|$ converges uniformly on $\Omega$ to $L(E)$.  But then for $E \in \R$, we have $E \notin \mathcal{Z} \iff L(E)>0 \iff E \in \mathcal{UH}$.
\end{proof}

We are now in a position to prove zero-measure Cantor spectrum. Here, we deviate from standard conventions slightly and call a subset of $\R$ a \emph{Cantor set} if it is closed, does not contain any isolated points, and has empty interior.

\begin{coro}\label{c.zmcs}
Suppose $(\Omega,T)$ is a minimal subshift which satisfies {\rm (B)} and such that the associated potentials $V_{\omega}$ are aperiodic and irreducible in the sense described in Section~\ref{sec:4}.  Then $\Sigma$ is a Cantor set of Lebesgue measure zero.
\end{coro}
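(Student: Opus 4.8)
The plan is to verify the three defining properties of a Cantor set in the sense adopted just above the statement—closedness, absence of isolated points, and empty interior—with a quantitative zero Lebesgue measure assertion subsuming the last of these. Two of the three are already in hand and require no further work: the first Proposition of Section~\ref{sec:2} produces a closed nonrandom set $\Sigma$ with $\sigma(H_\omega)=\Sigma$ for $\mu$-almost every $\omega$, and Lemma~\ref{isolated} guarantees that $\Sigma$ has no isolated points. Consequently the entire content of the corollary reduces to establishing $|\Sigma|=0$, from which empty interior is automatic, since any nonempty open subset of $\R$ has positive Lebesgue measure.

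The core of the argument is the chain $\Sigma \subset \R\setminus\mathcal{UH} = \mathcal{Z}$ combined with $|\mathcal{Z}|=0$. First, Proposition~\ref{p.sincompuh} supplies $\Sigma \subset \R\setminus\mathcal{UH}$ unconditionally: uniform hyperbolicity at an energy $E$ yields linearly independent solutions decaying at $\pm\infty$, hence a bounded Green's function, placing $E$ in the resolvent set. Second, under the standing hypotheses of minimality together with the Boshernitzan condition (B), Proposition~\ref{p.zeqcompuh} upgrades this to the identity $\R\setminus\mathcal{UH}=\mathcal{Z}$, so that $\Sigma \subset \mathcal{Z}$.

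It then remains to invoke the zero-measure statement for $\mathcal{Z}$, and this is where aperiodicity and irreducibility enter. By the Proposition of Section~\ref{sec:4}, these two hypotheses force $\Sigma_{\textup{ac}}=\emptyset$; feeding this into the Ishii-Pastur-Kotani identity $\Sigma_{\textup{ac}}=\overline{\mathcal{Z}}^{\textup{ess}}$ gives $\overline{\mathcal{Z}}^{\textup{ess}}=\emptyset$, whereupon Corollary~\ref{no.ac} concludes $|\mathcal{Z}|=0$. Combining the inclusion with this bound yields $|\Sigma|\le|\mathcal{Z}|=0$, completing the proof that $\Sigma$ is closed, perfect, and of zero Lebesgue measure, hence a Cantor set.

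I do not anticipate a genuine obstacle at this stage: all the analytic difficulty has been front-loaded into the earlier results—the Ishii-Pastur-Kotani theorem obtained via Kotani theory, the Klassert-Lenz-Stollmann input behind the absence of absolutely continuous spectrum, and the Damanik-Lenz uniform convergence of $\frac1n\log\|A_{E,\omega}^n\|$ underlying Proposition~\ref{p.zeqcompuh}. The only point demanding care is the bookkeeping of hypotheses, namely confirming that minimality plus (B) is exactly what licenses $\R\setminus\mathcal{UH}=\mathcal{Z}$, while aperiodicity plus irreducibility is exactly what licenses $|\mathcal{Z}|=0$, and that both pairs of assumptions are indeed provided in the statement. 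Given that, the corollary is a direct synthesis of the preceding propositions.
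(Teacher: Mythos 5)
Your proposal is correct and follows essentially the same route as the paper: closedness, Lemma~\ref{isolated} for the absence of isolated points, $\Sigma \subseteq \R\setminus\mathcal{UH} = \mathcal{Z}$ via Propositions~\ref{p.sincompuh} and \ref{p.zeqcompuh}, and $|\mathcal{Z}|=0$ from Corollary~\ref{no.ac}. The only difference is that the paper also records the reverse inclusion $\mathcal{Z} \subseteq \Sigma$ to conclude $\Sigma = \mathcal{Z}$, which is not actually needed for the statement, so your streamlined version loses nothing.
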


\begin{proof}
As it is the spectrum of an operator, $\Sigma$ is closed. Lemma \ref{isolated} tells us that $\Sigma$ lacks isolated points.  By Propositions~\ref{p.sincompuh} and \ref{p.zeqcompuh}, we have $\Sigma \subseteq \mathcal{Z}$.  The opposite inclusion, $\Sigma \supseteq \mathcal{Z}$, follows from general principles; compare, for example, the proof of \cite[Theorem~2.9]{CFKS}. Thus, $\Sigma = \mathcal{Z}$. Combining this with Corollary~\ref{no.ac}, which says that $|\mathcal{Z}|=0$, it follows that $\Sigma$ is nowhere dense and of zero Lebesgue measure.  Hence, $\Sigma$ is a Cantor set of zero Lebesgue measure.
\end{proof}

As there are many aperiodic subshifts that satisfy (B) \cite{DL06b} and most choices of local potential pieces yield aperiodic and irreducible potentials $V_\omega$, this result provides a large family of continuum Schr\"odinger operators whose spectrum is a Cantor set of zero Lebesgue measure. As pointed out in the introduction, to the best of our knowledge, no previous examples with this spectral property were known.

\section{The Case of the Fibonacci Subshift}\label{sec.6}

In this section we study a special case, namely the subshift generated by the Fibonacci substitution. The alphabet is given by $\mathcal{A} = \{ a,b \}$. The Fibonacci substitution is the map $S(a) = ab$, $S(b) = a$. This map extends by concatenation to finite words over $\mathcal{A}$, as well as one-sided infinite words over $\mathcal{A}$. There is a unique one-sided infinite word $u$ over $\mathcal{A}$ that obeys $u = S(u)$. It is obtained as the limit (in the obvious sense) of the sequence of finite words $\{ S^n(a) \}_{n \in \Z_+}$. The Fibonacci subshift $\Omega \subseteq \mathcal{A}^\Z$ is defined by $\Omega = \{ \omega \in \mathcal{A}^\Z : \text{every subword of $\omega$ is a subword of } u \}$. It is easy to see (cf.~\cite{DL06b}) that it satisfies (B). Given a choice of $\ell_a, \ell_b > 0$ and real-valued functions $f_a \in L^2(0,\ell_a)$, $f_b \in L^2(0,\ell_b)$, we consider as above the Schr\"odinger operators $\{H_\omega\}_{\omega \in \Omega}$. As usual, we assume that aperiodicity and irreducibility hold unless otherwise stated.

We may apply Corollary~\ref{c.zmcs} and deduce the following:

\begin{theorem}\label{t.fibcantor}
There is a Cantor set $\Sigma \subseteq \R$ of zero Lebesgue measure such that $\sigma(H_\omega) = \Sigma$ for every $\omega \in \Omega$.
\end{theorem}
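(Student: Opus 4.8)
The plan is to verify that the Fibonacci subshift meets every hypothesis of Corollary~\ref{c.zmcs} and then to strengthen the resulting almost-sure constancy of the spectrum so that it holds for \emph{every} $\omega \in \Omega$. The hypotheses are quickly checked. The Fibonacci substitution $S(a)=ab$, $S(b)=a$ is primitive, since its incidence matrix $\left(\begin{smallmatrix} 1 & 1 \\ 1 & 0 \end{smallmatrix}\right)$ has a strictly positive square, so the subshift $\Omega$ it generates is minimal. Condition (B) holds by the discussion preceding the statement (the subshift is generated by a primitive substitution, hence is linearly recurrent; cf.\ \cite{DL06b}), and aperiodicity together with irreducibility are the standing assumptions of this section. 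Corollary~\ref{c.zmcs} therefore applies and shows that the almost-sure spectrum $\Sigma$ is a Cantor set of zero Lebesgue measure, with $\Sigma = \mathcal{Z}$.

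The remaining --- and only genuinely delicate --- point is to promote $\sigma(H_\omega) = \Sigma$ from $\mu$-almost every $\omega$ to every $\omega$. I would do this through the uniform-hyperbolicity framework of Section~\ref{sec:5}, which is already phrased uniformly over $\Omega$. On the one hand, the proof of Proposition~\ref{p.sincompuh} produces, for $E \in \mathcal{UH}$ and for each individual $\omega \in \Omega$, decaying solutions $u_\pm$ and the associated Green's kernel; hence $E \in \rho(H_\omega)$ for every $\omega$, giving $\sigma(H_\omega) \subseteq \R \backslash \mathcal{UH}$ for all $\omega$. On the other hand, Proposition~\ref{p.zeqcompuh} identifies $\R \backslash \mathcal{UH}$ with $\mathcal{Z}$, using the uniform convergence of $\frac{1}{n} \log \|A^n_{E,\omega}\|$ on $\Omega$ from \cite{DL}; for $E \in \mathcal{Z}$ this same uniform convergence forces $\frac{1}{x} \log \|A_{E,\omega}(x,0)\| \to 0$ for every $\omega$, so the transfer matrices grow subexponentially and a Schnol-type argument places $E \in \sigma(H_\omega)$ for every $\omega$. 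Combining the two inclusions yields $\sigma(H_\omega) = \R \backslash \mathcal{UH} = \mathcal{Z}$ for every $\omega \in \Omega$, and this $\omega$-independent set coincides with the almost-sure $\Sigma$ from Corollary~\ref{c.zmcs}.

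Finally I would assemble the Cantor conclusion for this common set. It is closed, being the spectrum of a self-adjoint operator; it has no isolated points by Lemma~\ref{isolated}; and it has zero Lebesgue measure by Corollary~\ref{no.ac}, which in particular forces empty interior. Thus $\Sigma = \mathcal{Z}$ is a zero-measure Cantor set equal to $\sigma(H_\omega)$ for all $\omega \in \Omega$, as claimed. The principal obstacle in this scheme is exactly the passage from ``almost every'' to ``every'' $\omega$; this is where minimality of the Fibonacci subshift is indispensable, since it is what makes the uniform (over $\Omega$) estimates of Section~\ref{sec:5} available and renders the spectrum genuinely $\omega$-independent rather than merely $\mu$-almost surely constant.
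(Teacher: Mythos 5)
Your proposal is correct and follows essentially the same route as the paper, whose entire proof of this theorem is the single line ``apply Corollary~\ref{c.zmcs}.'' The extra work you do to upgrade $\sigma(H_\omega)=\Sigma$ from $\mu$-almost every to every $\omega$ --- showing $\sigma(H_\omega)=\R\setminus\mathcal{UH}=\mathcal{Z}$ pointwise in $\omega$ via the uniform estimates of Section~\ref{sec:5} --- is sound and fills in a step the paper leaves implicit (it later simply appeals to minimality for the $\omega$-independence of the spectrum), though the inclusion $\mathcal{Z}\subseteq\sigma(H_\omega)$ is really a Combes--Thomas/Green's function contrapositive rather than a Schnol-type argument.
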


Our goal is to go beyond this statement and to study fractal properties of $\Sigma$, that is, quantities such as the local Hausdorff dimension at a point of $\Sigma$. The properties of the spectrum are reflected by the dynamical properties of the so-called Fibonacci trace map. Namely, the set of all energies corresponds to a curve of initial conditions (which is model dependent), and a given energy belongs to the spectrum if and only if the positive semiorbit of the corresponding initial condition under the action of the trace map is bounded. Two specific cases (piecewise constant potential and Kronig-Penney model) have been considered in the physics literature most often (mostly via numerical experiments); see \cite{BJK, Gh, H, KN, KS, TK, WSS}. Here we provide rigorous results confirming and explaining the previous numerical observations for each of these models. It is interesting to notice that there is an essential difference in the spectral properties between these models. Namely, in the Kronig-Penney model there are values of the energy that belong to the spectrum regardless of the value of coupling constant, and where the local Hausdorff dimension of the spectrum is equal to one (therefore the so called ``pseudo bands'' are formed, see also \cite{BJK}). At the same time, these pseudo bands do not appear in the piecewise constant case, where the Hausdorff dimension of the spectrum in any compact domain tends to zero as the coupling constant tends to infinity.

\subsection{Trace Map, Fricke-Vogt Invariant, and Local Hausdorff Dimension of the Spectrum}

As is well known in the discrete case, the spectrum (and many spectral properties) of the Fibonacci model can be described in terms of the trace map. Let us make this connection explicit. Consider the solutions of the differential equation $-u''(x) + f_a(x) u(x) = E u(x)$ for real energy $E$. Denote the solution obeying $u(0) = 0$, $u'(0) = 1$ (resp., $u(0) = 1$, $u'(0) = 0$) by $u_{a,D}(\cdot,E)$ (resp., $u_{a,N}(\cdot,E)$). Similarly, by replacing $f_a$ with $f_b$, we define $u_{b,D}(\cdot,E)$ and $u_{b,N}(\cdot,E)$. Then, we set
\begin{align*}
M(a,E) & = \begin{pmatrix} u_{a,N}(\ell_a,E) & u_{a,D}(\ell_a,E) \\ u_{a,N}'(\ell_a,E) & u_{a,D}'(\ell_a,E) \end{pmatrix}, \\
M(b,E) & = \begin{pmatrix} u_{b,N}(\ell_b,E) & u_{b,D}(\ell_b,E) \\ u_{b,N}'(\ell_b,E) & u_{b,D}'(\ell_b,E) \end{pmatrix},
\end{align*}
and
\begin{align}
x_{-1}(E) & = \frac12 \tr \left( M(b,E) \right), \label{e.x-1} \\
x_0(E) & = \frac12 \tr \left( M(a,E) \right), \label{e.x0} \\
x_1(E) & = \frac12 \tr \left( M(b,E) M(a,E) \right) . \label{e.x1}
\end{align}
The map $E \mapsto (x_2(E), x_1(E), x_0(E))$ will be called the curve of initial conditions. The trace map $T : \R^3 \to \R^3$ is given by
$$
T(x,y,z) = (2xy-z,x,y).
$$

Define
$$
I(x,y,z) = x^2 + y^2 + z^2 - 2xyz - 1.
$$
Again, as is well known, the trace map $T$ preserves $I$ and hence its level surfaces
$$
S_I = \{ (x,y,z) \in \R^3 : I(x,y,z) = I \}.
$$
In particular, all the points $T^n(x_1(E), x_0(E), x_{-1}(E))$ lie on the surface $S_{I(E)}$, where (with some abuse of notation) we set
$$
I(E) = I(x_1(E), x_0(E), x_{-1}(E)).
$$

The surfaces $S_I$ undergo a transition at $I = 0$. For negative values of $I$, $S_I$ has one bounded connected component and four unbounded connected components. For $I = 0$, the four ``outside'' components attach to the inner part in four conic singularities. These singularities resolve as $I$ becomes positive, and in this case $S_I$ is connected and smooth. We show plots of $S_I$ for $I < 0$, $I = 0$, and $I > 0$ in Figure~1.

\bigskip

\begin{figure}\label{f.levsurf}
\begin{center}
   \includegraphics[scale=0.3]{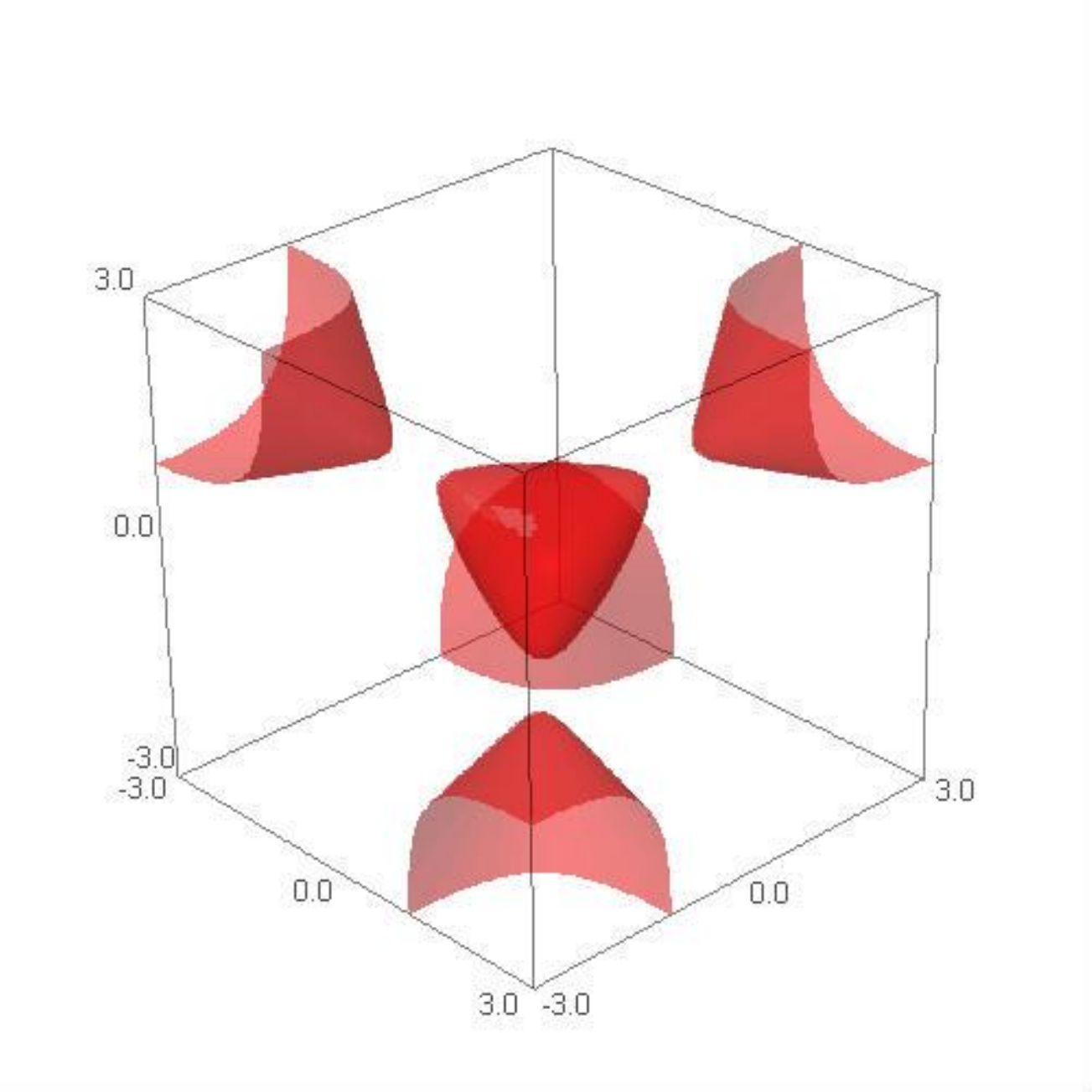}\quad
   \includegraphics[scale=0.3]{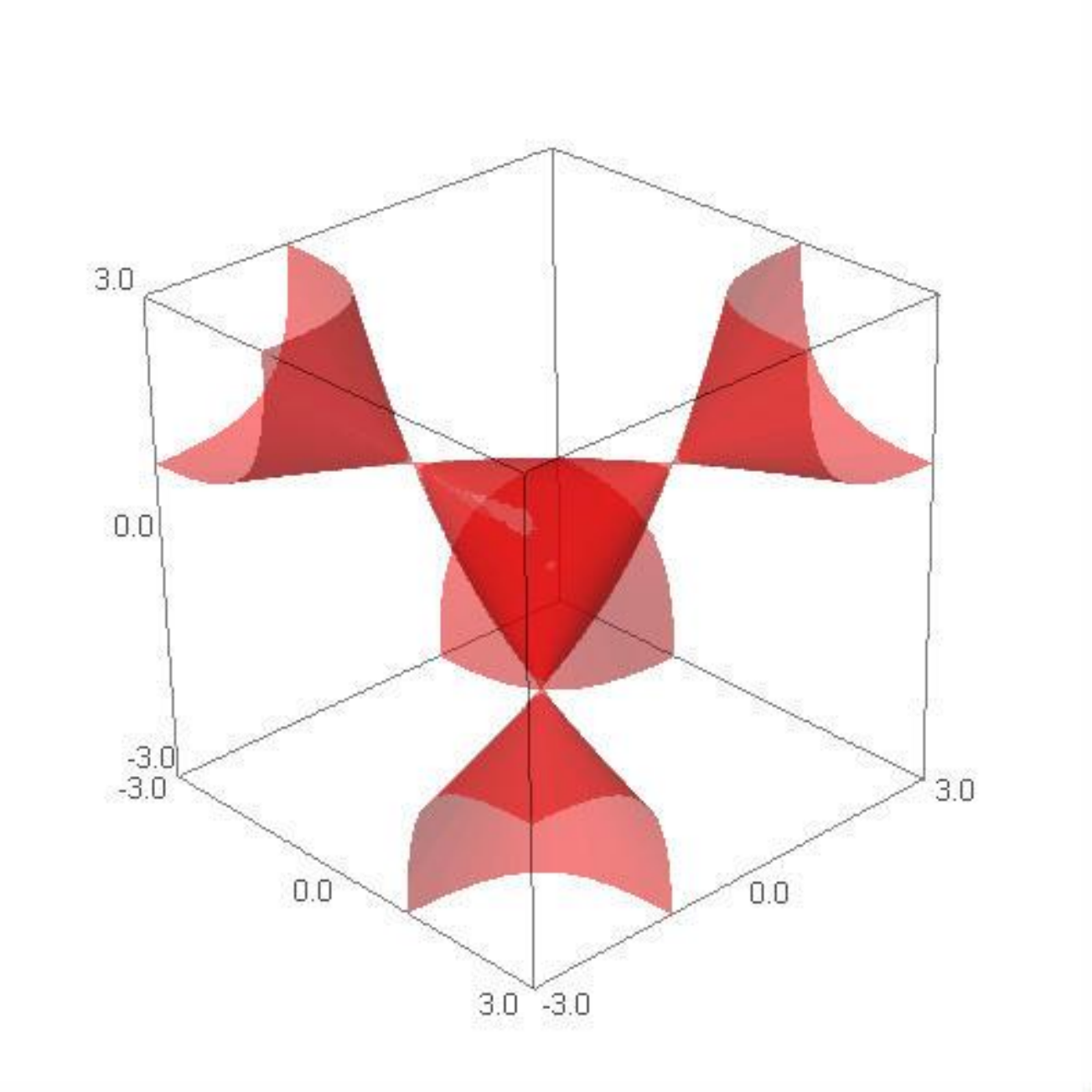}\quad
   \includegraphics[scale=0.3]{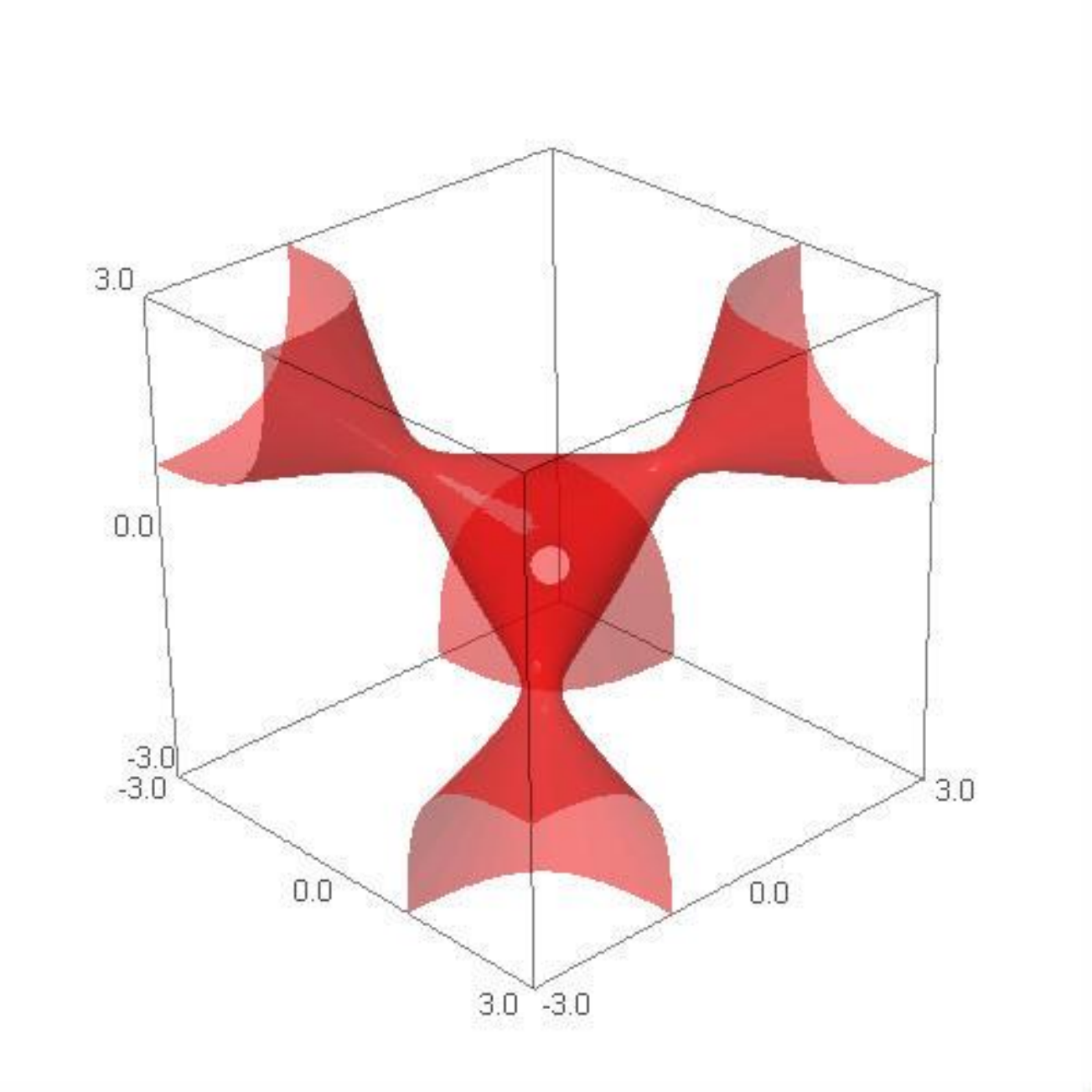}\quad

\end{center}
\vspace*{-1.5em}
\caption{\label{fig:sig}
The surface $S_I$ for $I < 0$, $I = 0$, and $I > 0$.
}
\end{figure}
\bigskip

We will need the following statement about the dynamics of $T$ on $S_I$ for $I < 0$.

\begin{lemma}\label{l.ilessthanzero}
Suppose $I < 0$. Then, every point on the bounded connected component of $S_I$ has a bounded $T$-orbit, and every point on one of the unbounded connected components of $S_I$ has unbounded forward and backward orbit under $T$.
\end{lemma}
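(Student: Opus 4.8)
The plan is to prove the two assertions separately and to reduce the backward dynamics to the forward dynamics at the outset. Let $R(x,y,z)=(z,y,x)$. A direct check gives $R\circ T\circ R=T^{-1}$, and since $I$ is symmetric, $R$ preserves $I$ and hence maps $S_I$ to itself; being a homeomorphism it permutes the connected components, fixing the bounded one and permuting the unbounded ones. Thus every statement about forward orbits transfers to one about backward orbits, and it suffices to analyze forward iteration. The bounded-component assertion is then soft: $T$ is a homeomorphism of $\R^3$ preserving $I$, hence a homeomorphism of $S_I$ that permutes its components, as does $T^{-1}$; since for $I<0$ the bounded component is the unique bounded one, it is carried onto itself by both $T$ and $T^{-1}$, so the full orbit of any of its points stays inside it and is bounded.

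The work is in showing that a point on an unbounded component has unbounded forward orbit. Encode the orbit of $p=(x,y,z)$ by the scalar sequence $(a_n)_{n\in\Z}$ with $a_0=x$, $a_{-1}=y$, $a_{-2}=z$ and the trace recursion $a_{n+1}=2a_na_{n-1}-a_{n-2}$, so that $T^np=(a_n,a_{n-1},a_{n-2})$. Each consecutive triple lies on $S_I$, and viewing $I(a_n,a_{n-1},a_{n-2})=I$ as a quadratic in the last slot shows that $a_{n-2}$ and $a_{n+1}$ are precisely the two roots of $t^2-2a_na_{n-1}t+(a_n^2+a_{n-1}^2-1-I)$. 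Reality of these roots forces the discriminant condition
\[
(a_n^2-1)(a_{n-1}^2-1)\ \ge\ -I\ >\ 0 .
\]
The two factors therefore have the same sign, so consecutive entries of $(a_n)$ lie on the same side of $1$ in modulus; chaining over all $n$, the whole sequence is of one type: either $|a_n|<1$ for every $n$, or $|a_n|>1$ for every $n$. In the first case the orbit stays in $[-1,1]^3$; a direct inspection of the real locus of $S_I$ shows that for $I<0$ the unbounded components avoid the open cube $(-1,1)^3$, so this type cannot occur for $p$ on an unbounded component, and we are reduced to the regime $|a_n|>1$ for all $n$.

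In this regime write $|a_n|=\cosh t_n$ with $t_n>0$. The root computation above identifies the two possible magnitudes of $a_{n+1}$ as $\cosh(t_n+t_{n-1})$ and $\cosh|t_n-t_{n-1}|$ (independently of the signs of the $a_n$), so each forward step is either \emph{additive}, $t_{n+1}=t_n+t_{n-1}$, or \emph{subtractive}, $t_{n+1}=|t_n-t_{n-1}|$. A single additive step makes $t_{n+1}$ strictly exceed both $t_n$ and $t_{n-1}$, hence puts the triple $(a_{n+1},a_n,a_{n-1})$ into the set
\[
C=\{\,|a_n|\ge\max(|a_{n-1}|,|a_{n-2}|)\ \text{and}\ |a_{n-1}|>1\,\},
\]
which is forward invariant and on which the reverse triangle inequality gives $|a_{n+1}|\ge 2|a_n||a_{n-1}|-|a_{n-2}|\ge(2|a_{n-1}|-1)\,|a_n|$, with $2|a_{n-1}|-1>1$; iterating yields geometric escape $|a_n|\to\infty$.

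The crux is therefore to exclude the possibility that every forward step is subtractive, and this is the step I expect to be the main obstacle. I would argue by contradiction: for an everywhere-subtractive forward orbit the quantity $\max(t_{n-1},t_n)$ is nonincreasing, and the Euclidean-type nature of the subtractive recursion drives it to $0$; then $(a_n,a_{n-1},a_{n-2})$ accumulates at a corner $(\pm1,\pm1,\pm1)$. But these corners satisfy $I\in\{0,4\}$ and so do not lie on the closed set $S_I$ when $I<0$, whereas a limit of points of $S_I$ must lie on $S_I$ — a contradiction. Hence at least one additive step occurs, the orbit enters $C$, and it escapes to infinity. Combining the two cases, the forward orbit of any point on an unbounded component is unbounded, and applying the flip symmetry $R T R=T^{-1}$ gives the same conclusion for the backward orbit, completing the proof. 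The two delicate points to be nailed down are the geometric input that the unbounded components avoid $(-1,1)^3$ and the precise "Euclidean decay" behind the corner-accumulation argument.
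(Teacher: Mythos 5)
Your reduction of backward to forward dynamics via $R(x,y,z)=(z,y,x)$, the soft argument for the bounded component, the observation that the discriminant condition $(x^2-1)(y^2-1)\ge -I>0$ (applied to all three coordinate pairs) splits $S_I$ into the open cube and the region $\{|x|,|y|,|z|>1\}$, and the Sütő-type escape lemma for the set $C$ are all correct. But the crux of your argument --- ruling out a forward orbit that never takes an ``additive'' step --- contains a genuine error. The identity $t_{n+1}\in\{t_n+t_{n-1},\,|t_n-t_{n-1}|\}$ for $|a_n|=\cosh t_n$ holds \emph{only} on the Cayley cubic $I=0$; for $I<0$ the two roots of the quadratic have magnitudes $\cosh t_n\cosh t_{n-1}\pm\sqrt{\sinh^2 t_n\sinh^2 t_{n-1}+I}$, which are strictly between $\cosh|t_n-t_{n-1}|$ and $\cosh(t_n+t_{n-1})$. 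Two things break as a consequence. First, the ``subtractive'' (smaller) root can still exceed $\max(|a_n|,|a_{n-1}|)$ (e.g.\ when the discriminant is near zero both roots are close to $\cosh t_n\cosh t_{n-1}$), so $\max(t_{n-1},t_n)$ need not be nonincreasing under your dichotomy. Second, and fatally, the proposed contradiction via accumulation at a corner $(\pm1,\pm1,\pm1)$ can never be reached: reality of the point $(a_n,a_{n-1},a_{n-2})\in S_I$ forces $\sinh^2 t_n\sinh^2 t_{n-1}\ge -I>0$ for every $n$, so along any forward orbit in the exterior region with bounded $t_n$ the quantities $t_n$ are uniformly bounded \emph{away} from $0$; the orbit cannot approach the corners or edges of the cube. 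So the mechanism you designate as ``the main obstacle'' is not merely delicate --- it aims at an impossible limit, and the never-escaping case remains unexcluded. What still needs proof is precisely that a forward orbit confined to an unbounded component cannot satisfy $|a_{n+1}|<\max(|a_n|,|a_{n-1}|)$ for all $n$, and this requires a different idea.

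For comparison, the paper does not prove this lemma from scratch: it quotes \cite[Proposition~4]{BR} for the invariance of the bounded component and \cite[Theorem~4.3]{R96} for the unboundedness of forward and backward orbits on the unbounded components; the latter is a nontrivial result whose proof in \cite{R96} is exactly the analysis your sketch is missing. If you want a self-contained argument, you should either import that theorem or supply a genuine replacement for the corner-accumulation step.
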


\begin{proof}
By \cite[Proposition~4]{BR}, the bounded connected component of $S_I$ is forward and backward invariant under $T$ and hence the $T$-orbit of any point on it is bounded.

On the other hand, by \cite[Theorem~4.3]{R96}, any point on one of the unbounded components of $S_I$ has unbounded forward and backward orbits under $T$.
\end{proof}

The dynamical spectrum $B$ is defined by
$$
B = \{ E \in \R : \{ T^n(x_1(E), x_0(E), x_{-1}(E)) \}_{n \in \Z_+} \text{ is bounded} \}.
$$

\begin{prop}\label{p.sigmab}
We have $\Sigma = B$.
\end{prop}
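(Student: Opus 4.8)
The plan is to pass through the transfer-matrix cocycle and to exploit the identification $\Sigma = \mathcal{Z} = \R\setminus\mathcal{UH} = \{E : L(E)=0\}$ furnished by Propositions~\ref{p.sincompuh} and \ref{p.zeqcompuh} together with Corollary~\ref{c.zmcs}. First I would set up the Fibonacci transfer matrices: let $T_n(E)$ be the transfer matrix across the word $S^n(a)$ for $n\ge 0$ and put $T_{-1}(E)=M(b,E)$. Since $S^{n+1}(a)=S^n(a)\,S^{n-1}(a)$ and transfer matrices of concatenated words multiply, one obtains the recursion $T_{n+1}=T_{n-1}T_n$; each $T_n$ is unimodular and $x_n(E)=\frac12\tr T_n(E)$. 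The $\mathrm{SL}(2,\R)$ trace identity then yields $(x_{n+1},x_n,x_{n-1})=T(x_n,x_{n-1},x_{n-2})$, so $T^n(x_1,x_0,x_{-1})=(x_{n+1},x_n,x_{n-1})$ and hence $E\in B$ if and only if the scalar sequence $(x_n)_{n\ge -1}$ is bounded. Finally, fixing $\omega^\star\in\Omega$ whose nonnegative part coincides with the fixed point $u$, each prefix $S^n(a)$ of $u$ is read off by $\omega^\star$, so $T_n(E)=A_{E,\omega^\star}(L_n,0)$ with $L_n=\ell(S^n(a))\asymp\varphi^n$, where $\varphi$ is the golden ratio. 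Because $\omega^\star$ may be a single point of zero $\mu$-measure, it is essential that the convergence of $\frac1x\log\|A_{E,\omega}(x,0)\|$ to $L(E)$ is \emph{uniform} on $\Omega$ (this is precisely where Proposition~\ref{p.zeqcompuh} and condition (B) enter); granting it, $\frac{1}{L_n}\log\|T_n(E)\|\to L(E)$.

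For the inclusion $B\subseteq\Sigma$, suppose $E\in B$, i.e. $\sup_n|x_n|<\infty$. The crucial input is the classical fact that, for the Fibonacci recursion, boundedness of the trace orbit forces $\sup_n\|T_n\|<\infty$; this is due to S\"ut\H{o}~\cite{S87} and carries over verbatim, since its proof uses only the recursion $T_{n+1}=T_{n-1}T_n$ and unimodularity. Granting it, $\frac{1}{L_n}\log\|T_n\|\to 0$, so $L(E)=0$ and therefore $E\in\mathcal{Z}=\Sigma$.

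For the reverse inclusion I would argue by contraposition and show $E\notin B\Rightarrow E\notin\Sigma$. If $(x_n)$ is unbounded, the escape lemma for the Fibonacci trace map provides $c>0$ and $N$ with $|x_n|\ge\exp(c\varphi^n)$ for $n\ge N$: once a triple satisfies $|x_{n-2}|\le|x_{n-1}|\le|x_n|$ with $|x_{n-1}|>1$, the recursion gives $|x_{n+1}|\ge|x_n|\,|x_{n-1}|$, so $\log|x_n|$ grows at least like the Fibonacci numbers, i.e. like $\varphi^n$. Using $\|T_n\|\ge\tfrac12|\tr T_n|=|x_n|$ and $L_n\asymp\varphi^n$, we get $\frac{1}{L_n}\log\|T_n\|\ge \frac{c\varphi^n}{L_n}$, which stays bounded below by a positive constant; hence $L(E)>0$, that is $E\in\mathcal{UH}$ and $E\notin\Sigma$. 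Combining the two inclusions gives $\Sigma=B$.

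The main obstacle is the quantitative escape lemma used in the third step. Lemma~\ref{l.ilessthanzero} already records the qualitative dichotomy (bounded versus unbounded orbits) on the surfaces $S_I$ with $I<0$, but I need the sharper statement that unbounded orbits grow super-exponentially, valid for all values of $I(E)$, including $I(E)\ge 0$. This requires first showing that an unbounded orbit must eventually enter the monotone regime $|x_{n-2}|\le|x_{n-1}|\le|x_n|$ with $|x_{n-1}|>1$, which is the delicate combinatorial part of S\"ut\H{o}'s analysis; the subsequent Fibonacci-type lower bound is then elementary. The complementary lemma in the second step (bounded traces force bounded matrix norms) is comparatively routine but likewise genuinely uses the recursive structure rather than submultiplicativity alone.
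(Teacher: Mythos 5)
Your route is genuinely different from the paper's: you funnel both inclusions through the Lyapunov exponent, using the identification $\Sigma=\mathcal{Z}=\{E:L(E)=0\}$ from Propositions~\ref{p.sincompuh}, \ref{p.zeqcompuh} and Corollary~\ref{c.zmcs}, together with the uniform convergence $\frac{1}{L_n}\log\|T_n(E)\|\to L(E)$ along the prefix lengths $L_n\asymp\varphi^n$. The paper instead proves $B\subseteq\Sigma$ by a Gordon-type argument (the prefixes $S^n(a)S^n(a)$ of $u$ plus bounded traces rule out solutions square-integrable at $+\infty$), and $\Sigma\subseteq B$ by periodic approximation: $\sigma_n=\{|x_n|\le 1\}$ is the spectrum of the $S^n(a)$-periodic operator, strong resolvent convergence gives $\Sigma\subseteq\bigcap_n\overline{\bigcup_{k\ge n}\sigma_k}$, and the escape criterion ($|x_{n+1}|>1$, $|x_n|>1$, $|x_{n+1}x_n|>|x_{n-1}|$) is open in $E$. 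Your contrapositive argument for $\Sigma\subseteq B$ is fine: granting S\"ut\H{o}'s escape lemma, $\log|x_n|\gtrsim\varphi^n$, hence $\|T_n\|\ge|x_n|$ forces $L(E)>0$ and $E\notin\Sigma$. What your route buys is that it avoids both the Gordon lemma and the periodic approximation; what it costs is an essential reliance on condition (B) through the uniformity in Proposition~\ref{p.zeqcompuh}, which the paper's proof of this particular proposition does not need.

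The genuine gap is in your $B\subseteq\Sigma$ step: the ``classical fact'' you invoke --- that bounded traces force $\sup_n\|T_n\|<\infty$ --- is false, and it is not what S\"ut\H{o} proves in \cite{S87}. For the discrete Fibonacci Hamiltonian at large coupling it is known that $\|T_n(E)\|$ grows at least like a positive power of the length $F_n$ for \emph{every} $E$ in the spectrum, even though all of these energies have bounded trace orbits; uniform boundedness of the transfer matrices simply does not follow from boundedness of the traces. What is true (S\"ut\H{o}, Iochum--Testard) is the power-law upper bound: if $|x_k(E)|\le C_0$ for all $k$, then the identity $T_{n+2}=2x_{n+1}T_n+T_{n-1}-2x_{n-1}I$ (obtained from $T_{n+2}=T_nT_{n-1}T_n$ and Cayley--Hamilton, $A^{-1}=(\tr A)I-A$ for $A\in\mathrm{SL}(2,\R)$) yields $\|T_{n+2}\|\le(2C_0+1)\max\bigl(\|T_n\|,\|T_{n-1}\|\bigr)+2C_0$, hence $\|T_n\|\le C\gamma^n$ for some $\gamma=\gamma(C_0)$, i.e., growth polynomial in $L_n\asymp\varphi^n$. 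This subexponential bound is exactly what you need to conclude $\frac{1}{L_n}\log\|T_n\|\to 0$ and $L(E)=0$, so the inclusion survives once you replace the false boundedness claim by the correct power-law estimate. Note also that, as you anticipated, the derivation genuinely uses the recursion and Cayley--Hamilton, not submultiplicativity alone.
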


\begin{proof}
Recall that the one-sided infinite Fibonacci word $u$ is the limit of the finite words $S^n(a)$, $n \ge 0$. Moreover, it is not hard to see that the Fibonacci subshift $\Omega$ contains an element $\omega_u$ whose restriction to the right half-line coincides with $u$. Since the spectrum of $H_\omega$ is $\omega$-independent by minimality, we can study the set $\Sigma$ by considering the spectrum of the particular operator $H_{\omega_u}$. For $E \in \R$ and $n \ge 0$, denote by $M_n(E)$ the transfer matrix corresponding to the operator $H_{\omega_u}$, the energy $E$, and the interval arising from the word $S^n(a)$ through our substitution procedure. Due to $S^{n+1}(a) = S^n(ab) = S^n(a) S^{n-1}(a)$, we have
\begin{equation}\label{e.matrrec}
M_{n+1}(E) = M_{n-1}(E) M_n(E), \quad n \ge 1.
\end{equation}
Using the Cayley-Hamilton theorem, one sees that for $x_n(E) := \frac12 \mathrm{Tr} \, M_n(E)$, $n \ge 0$, we have
\begin{equation}\label{e.tracerec}
x_{n+1}(E) = 2x_n(E) x_{n-1}(E) - x_{n-2}(E), \quad n \ge 2.
\end{equation}
Note that \eqref{e.matrrec} and \eqref{e.tracerec} can be used to \emph{define} $M_n(E)$ and $x_n(E)$ for $n < 0$. In particular,
$$
M_{-1}(E) = M_1(E) M_0(E)^{-1}.
$$
Since $M_1(E)$ corresponds to $S^1(a) = ab$ and $M_0(E)$ corresponds to $S^0(a) = a$, this shows that the matrix $M_{-1}(E)$ is just the transfer matrix across $f_b$ corresponding to the energy $E$. In other words, our current definition of $x_n(E)$ for $n = -1,0,1$ agrees with the definition given above; compare \eqref{e.x-1}--\eqref{e.x1}.

In particular, we have
$$
B = \{ E \in \R : \{ x_n(E) \}_{n \in \Z_+} \text{ is bounded} \}.
$$

We can now prove the inclusion ``$B \subseteq \Sigma$.'' Suppose $E \in B$. As is well known (and in fact easy to check), $u$ does not only have $S^n(a)$, $n \ge 0$ as prefixes, but also $S^n(a) S^n(a)$, $n \ge 2$. Combining this with the boundedness of $x_n(E)$ for $E \in B$ and the Gordon lemma (see, e.g., \cite{D00, DS00, G76}), we find that no solution of $-u'' + V_{\omega_u}u = Eu$ is square-integrable at $+\infty$. This implies by standard arguments that $E \in \sigma(H_{\omega_u}) = \Sigma$.

\bigskip

Next, we prove the inclusion ``$\Sigma \subseteq B$.'' Denote
$$
\sigma_n = \{ E \in \R : |x_n(E)| \le 1 \}.
$$
By Floquet-Bloch theory, $\sigma_n$ is the spectrum of the Schr\"odinger operator $H_n$ with periodic potential obtained by repeating the piece corresponding to $S^n(a)$. The operators $H_n$ converge in strong resolvent sense to $H_\omega$ for some suitably chosen $\omega \in \Omega$. This implies
\begin{equation}\label{e.specincl}
\Sigma = \sigma(H_\omega) \subseteq \bigcap_{n \ge 0} \overline{\bigcup_{k \ge n} \sigma_k}.
\end{equation}
On the other hand, a minor modification of \cite[Proposition~12.8.6]{OPUC2} shows that $E \not\in B$ if and only if there exists $n$ with $|x_{n+1}(E)| > 1$, $|x_{n}(E)| > 1$, and $|x_{n+1}(E) x_{n}(E)| > |x_{n-1}(E)|$. Moreover, in this case, we necessarily have $|x_k(E)| > 1$ for all $k \ge n$.

Combining these two facts, we can now conclude the proof. Suppose $E \not\in B$. Then there exists $n$ with $|x_{n+1}(E)| > 1$, $|x_{n}(E)| > 1$, and $|x_{n+1}(E) x_{n}(E)| > |x_{n-1}(E)|$. Choose $\varepsilon > 0$ such that $|x_{n+1}(E')| > 1$, $|x_{n}(E')| > 1$, and $|x_{n+1}(E') x_{n}(E')| > |x_{n-1}(E')|$ for every $E'$ with $|E - E'| < \varepsilon$. For each of these $E'$, we therefore have $|x_k(E')| > 1$ for all $k \ge n$. This shows that
$$
E \not\in \overline{\bigcup_{k \ge n} \sigma_k}
$$
and hence $E \not\in \Sigma$ by \eqref{e.specincl}.
\end{proof}

At this point we would like to point out a major difference between the discrete case and the continuum case. In the discrete case, $I(E)$ is actually constant, while in the continuum case, it is in general not constant (as we will explicitly see below). As we will see later, this leads to new phenomena in the continuum case that make its study worthwhile.

Moreover, in the discrete case, the invariant is always non-negative. In the continuum case, we cannot a priori rule out that it may be negative for some energies. The following proposition shows that even if this happens, these energies are not essential as they must lie outside the spectrum. As a consequence, the study of the dynamics of the trace map on $S_I$ for $I \ge 0$, which has been investigated heavily in the papers on the discrete Fibonacci Hamiltonian, is sufficient to describe the corresponding spectral properties of the continuum Fibonacci Hamiltonian.

\begin{prop}\label{p.nonneginv}
We have $I(E) \ge 0$ for every $E \in \Sigma$.
\end{prop}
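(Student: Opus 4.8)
The plan is to argue by contradiction, combining $\Sigma = B$ (Proposition~\ref{p.sigmab}), the orbit dichotomy on $S_I$ for $I<0$ (Lemma~\ref{l.ilessthanzero}), and the fact that $\Sigma$ has empty interior (Theorem~\ref{t.fibcantor}). So I would suppose there is an energy $E_0 \in \Sigma$ with $I(E_0) < 0$ and seek a contradiction.

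The key elementary input is the factorization
\[
I(x,y,z) = (x-yz)^2 - (1-y^2)(1-z^2),
\]
together with its two cyclic variants. From these I would first deduce that, when $I<0$, the surface $S_I$ is disjoint from the walls $\{|x|=1\}\cup\{|y|=1\}\cup\{|z|=1\}$ and splits cleanly: if any coordinate has modulus exactly $1$ the corresponding identity gives $I=(\,\cdot\,)^2\ge 0$, and if one coordinate has modulus $\ge 1$ while another has modulus $\le 1$ then the matching factor $-(1-\,\cdot\,)(1-\,\cdot\,)$ is nonnegative, again forcing $I\ge 0$. Hence every point of $S_I$ with $I<0$ either has all three coordinates in the open cube $(-1,1)^3$ or all three of modulus $>1$. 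Comparing with the structure recalled before Lemma~\ref{l.ilessthanzero} (one bounded and four unbounded components), the bounded component is exactly the part lying in the open cube, while the four unbounded components are the pieces with all coordinates of modulus $>1$.

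Next, write $p_E = (x_1(E),x_0(E),x_{-1}(E))$, which depends continuously (indeed real-analytically) on $E$, and set $I(E)=I(p_E)$, also continuous. Since $E_0\in\Sigma=B$, the forward orbit of $p_{E_0}$ is bounded, so by Lemma~\ref{l.ilessthanzero} the point $p_{E_0}$ cannot lie on an unbounded component; thus $p_{E_0}$ lies on the bounded component, i.e.\ $|x_j(E_0)|<1$ for $j\in\{-1,0,1\}$. Let $J$ be the open connected component of $\{E:I(E)<0\}$ containing $E_0$; it is an open interval. On $J$ no coordinate can have modulus $1$ (by the first step), so the two \emph{open} conditions ``all $|x_j(E)|<1$'' and ``all $|x_j(E)|>1$'' partition $J$, and $E_0$ lies in the first. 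Connectedness of $J$ then forces $p_E$ into the open cube for every $E\in J$. By the other half of Lemma~\ref{l.ilessthanzero} each such $p_E$ has bounded forward orbit, so $J\subseteq B=\Sigma$; but $J$ is a nonempty open interval, contradicting the empty interior of $\Sigma$ guaranteed by Theorem~\ref{t.fibcantor}. This contradiction yields $I(E)\ge 0$ for all $E\in\Sigma$.

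I expect the main obstacle to be the first step: establishing that the bounded component of $S_I$ for $I<0$ sits strictly inside the open unit cube and is separated from the unbounded components precisely by the walls $|x_j|=1$. This is exactly what allows continuity to propagate the property ``lies on the bounded component'' along the whole interval $J$. Once that topological separation is secured via the factorization, the continuity/connectedness step and the appeal to zero-measure (empty-interior) spectrum are routine.
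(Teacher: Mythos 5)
Your proof is correct and follows essentially the same route as the paper's: assume $I(E_0)<0$ for some $E_0\in\Sigma=B$, use Lemma~\ref{l.ilessthanzero} to place the initial condition on the bounded component of $S_{I(E_0)}$, propagate this by continuity to a whole interval of energies with bounded forward orbits, and contradict the Cantor structure from Theorem~\ref{t.fibcantor}. The only difference is that you make explicit, via the factorization $I=(x-yz)^2-(1-y^2)(1-z^2)$ and its cyclic variants, why the point cannot jump off the bounded component as $E$ varies --- a step the paper passes over with ``now wiggle $E$.''
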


\begin{proof}
Assume there is $E \in \Sigma$ with $I(E) < 0$. Then, since the forward trace map orbit of $(x_2(E) , x_1(E) , x_0(E))$ remains bounded by Proposition~\ref{p.sigmab}, this point must belong to the compact component of the invariant surface $S_{I(E)}$ by Lemma~\ref{l.ilessthanzero}. Now wiggle $E$. By continuity of $x_n(\cdot)$, nearby $E'$'s must have $I(E') < 0$ and $(x_2(E') , x_1(E') , x_0(E'))$ contained in the bounded component of $S_{I(E)}$ as well. Thus, again by Lemma~\ref{l.ilessthanzero}, $(x_1(E') , x_2(E') , x_3(E'))$ has bounded forward trace map orbit, too, and hence $E' \in \Sigma$ by Proposition~\ref{p.sigmab}. It follows that $\Sigma$ is not a Cantor set, which contradicts Theorem~\ref{t.fibcantor}.
\end{proof}

We can now address the local fractal dimension of the spectrum.

\begin{theorem}\label{t.fiblocaldim}
There is a continuous map $D : [0,\infty) \to (0,1]$ with the following properties:
\begin{itemize}

\item[{\rm (i)}] $\dim_\mathrm{H}^\mathrm{loc}(E,\Sigma) = D(I(E))$ for every $E \in \Sigma$.

\item[{\rm (ii)}] We have $D(0) = 1$ and $1 - D(I) \asymp \sqrt{I}$ as $I \downarrow 0$.

\item[{\rm (iii)}] We have
$$
\lim_{I \to \infty} D(I) \cdot \log I = 2 \log (1 + \sqrt{2})
$$

\item[{\rm (iv)}] $D$ is real analytic in $(0, \infty)$.

\end{itemize}
\end{theorem}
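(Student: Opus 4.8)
The plan is to reformulate everything in terms of the trace map $T$ and then apply the theory of Hausdorff dimension of hyperbolic sets together with thermodynamic formalism. By Proposition~\ref{p.sigmab} we have $\Sigma=B$, and by Proposition~\ref{p.nonneginv} we have $I(E)\ge0$ for $E\in\Sigma$; thus it suffices to understand, for each base energy $E_0\in\Sigma$, how the curve of initial conditions $\gamma(E):=(x_1(E),x_0(E),x_{-1}(E))$ meets the set of bounded $T$-orbits near $\gamma(E_0)$ on the level surfaces $S_I$ with $I\ge0$.

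For each $I>0$, the set $\Omega_I$ of points of $S_I$ with bounded $T$-orbit is a compact, locally maximal hyperbolic set for $T|_{S_I}$; this is the content of the dynamical analysis of the Fibonacci trace map underlying Lemma~\ref{l.ilessthanzero}. The involution $R(x,y,z)=(z,y,x)$ preserves each $S_I$ and satisfies $R\circ T\circ R=T^{-1}$, so it carries $W^s(\Omega_I)$ onto $W^u(\Omega_I)$; consequently the transverse Hausdorff dimensions of the stable and unstable laminations of $\Omega_I$ coincide (consistent with $\det DT=-1$, i.e.\ area preservation on each $S_I$). I denote this common value by $d(I)$, so that $\dim_\mathrm{H}\Omega_I=2d(I)$, and I define $D(I):=d(I)$ for $I>0$ and $D(0):=1$.

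To prove (i), fix $E_0\in\Sigma$ with $I_0:=I(E_0)>0$. Boundedness of the forward orbit of $\gamma(E)$ is equivalent to $\gamma(E)\in W^s(\Omega_{I(E)})$, so locally $\Sigma=\gamma^{-1}(\mathcal{W}^s)$ with $\mathcal{W}^s:=\bigcup_I W^s(\Omega_I)$. By persistence and real-analytic dependence of hyperbolic sets on parameters, $\{\Omega_I\}$ forms a normally hyperbolic lamination with one-dimensional center along the $I$-direction, and $\mathcal{W}^s$ carries a local product structure whose transverse (unstable) factor is a Cantor set of dimension $d(I)$. The decisive step is to show that $\gamma$ is transverse to the center-stable leaves, i.e.\ that $\gamma'(E)$ has a nonvanishing component in the unstable direction at every point of $\Sigma$. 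Granting this, projection along the center-stable leaves identifies $\gamma^{-1}(\mathcal{W}^s)$ near $E_0$ with a bi-Lipschitz copy of the transverse Cantor set, and continuity of $d$ yields $\dim_\mathrm{H}^\mathrm{loc}(E_0,\Sigma)=d(I_0)=D(I_0)$; the boundary value at $I_0=0$ is handled together with step (ii).

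Claim (iv) is then standard thermodynamic formalism: $d(I)$ is the unique zero of the pressure function $s\mapsto P\big(T|_{\Omega_I},-s\log\|DT|_{E^u}\|\big)$ (Bowen's equation), which depends real-analytically on $I$ and is strictly monotone in $s$, so the implicit function theorem gives real analyticity of $D$ on $(0,\infty)$. The two asymptotic statements carry the analytic weight, and I would adapt the corresponding analysis for the discrete Fibonacci Hamiltonian due to Damanik and Gorodetski. For (ii), as $I\downarrow0$ the surfaces $S_I$ degenerate onto the singular surface $S_0$ with its four conic points (Lemma~\ref{l.ilessthanzero}); a normal-form analysis near the emerging near-parabolic behavior shows $d(I)\to1$, with the quadratic nature of the tangency at $I=0$ producing the rate $1-D(I)\asymp\sqrt{I}$ and continuity $D(0)=1$. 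For (iii), as $I\to\infty$ the restriction $T|_{S_I}$ becomes uniformly strongly hyperbolic and conjugate to a subshift whose dominant expansion rates are controlled by the number $1+\sqrt2$; tracking these rates yields $D(I)\sim 2\log(1+\sqrt2)/\log I$. The main obstacle I anticipate is the transversality in step (i): unlike the discrete model, where $\gamma$ is an affine line lying in a single surface, here $\gamma$ is nonlinear and crosses the whole family $\{S_{I(E)}\}$, so transversality to the center-stable lamination must be verified directly at every point of $\Sigma$ (in particular at degenerate points where $I'(E)=0$), and one must confirm that it feeds correctly into the product structure so that the dimension count produces exactly $d(I_0)$.
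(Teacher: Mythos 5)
Your architecture---hyperbolicity of the trace map on the invariant surfaces $S_I$, Bowen's equation for the transverse dimension $d(I)$, and intersection of the curve of initial conditions $\gamma$ with the center-stable lamination---is indeed the machinery that underlies this theorem. But the step you yourself flag as the main obstacle, transversality of $\gamma$ to the center-stable lamination at every point of $\Sigma$ (in particular where $I'(E)=0$), is left entirely open, and it cannot be ``verified directly'' in this generality: the local pieces $f_a, f_b$ are arbitrary $L^2$ functions, so there is no explicit expression for $\gamma$ whose derivative one could inspect. The paper closes exactly this gap by invoking \cite[Theorem 2.13]{DMY}, whose hypotheses are not transversality but the two soft facts already established earlier: $\Sigma$ has no isolated points (Lemma~\ref{isolated}) and $I(E)\ge 0$ for $E\in\Sigma$ (Proposition~\ref{p.nonneginv}). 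The mechanism there is that a tangential or otherwise degenerate intersection of the curve with the stable lamination would produce an isolated point of $B=\Sigma$ (Proposition~\ref{p.sigmab}), so absence of isolated points substitutes for a pointwise transversality check. Without this device, or an equivalent one, your proof of (i) does not go through; this is the genuine gap.

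Two secondary remarks. First, your treatments of (ii), (iii), (iv) are programmatic (``adapt the discrete analysis''); the paper obtains them by citing \cite{DG11}, \cite{DEGT}, and \cite[Theorem~5.23]{Can} respectively, and your outlines, while consistent with what those references prove, carry no independent weight as written. Second, the degeneration of $S_I$ onto the Cayley cubic with its four conic singularities occurs at $I=0$; Lemma~\ref{l.ilessthanzero} concerns the regime $I<0$ and is not the right reference for the near-singular analysis driving the $1-D(I)\asymp\sqrt{I}$ asymptotics.
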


\begin{proof}
Recall that by Lemma~\ref{isolated}, $\Sigma$ contains no isolated points. Therefore, by Proposition~\ref{p.nonneginv} and \cite[Theorem 2.13]{DMY}, $\dim_\mathrm{H}^\mathrm{loc} (E, \Sigma)$ for $E \in \Sigma$ depends only on the value of $I(E)$. This implies (i). Given this, (ii) follows from \cite{DG11} and (iii) follows from \cite{DEGT}. (iv) follows from \cite[Theorem~5.23]{Can}. Together with (ii), this implies continuity of $D$ on $[0,\infty)$.
\end{proof}

\subsection{Explicit Computations and Formulae in Special Cases}

In this subsection we consider special choices of the local potential pieces $f_a, f_b$ for which it is possible to compute the Fricke-Vogt invariant $I(E)$ explicitly. As we have seen above, this has a direct connection to the local structure of the spectrum near an energy $E \in \Sigma$. The explicit formulae also permit us to identify the limit behavior of the local Hausdorff dimension of the spectrum in the regime of large energy or small/large coupling.

\subsubsection{A Piecewise Constant Potential}

For comparison purposes, let us start with the free case, $f_a = f_b \equiv 0$. That is, we consider the free Schr\"odinger operator $-\frac{d^2}{dx^2}$ in $L^2(\R)$, but view its (zero) potential as tiled according to a Fibonacci sequence. In this degenerate case we of course do not have aperiodicity and irreducibility. However, all quantities and formulae that arise from the presence of Fibonacci symmetries still exist. In particular, we may compute the curve of initial conditions and the resulting Fricke-Vogt invariant $I(E)$.

For $E > 0$, we have
$$
M(a,E) = M(b,E) = \begin{pmatrix} \cos \sqrt{E} &  \frac{1}{\sqrt{E}}\sin \sqrt{E}\\ -\sqrt{E}\sin\sqrt{E} & \cos \sqrt{E} \end{pmatrix},
$$
and hence $x_{-1}(E) = x_0(E) = \cos \sqrt{E}$. Similarly we obtain $x_1(E) = \cos(2 \sqrt{E})$.

For $E < 0$, we have
$$
M(a,E) = M(b,E) = \begin{pmatrix} \cosh \sqrt{-E} &  \frac{1}{\sqrt{-E}}\sinh \sqrt{-E} \\ \sqrt{-E}\sinh\sqrt{-E}  & \cosh \sqrt{-E} \end{pmatrix},
$$
and hence $x_{-1}(E) = x_0(E) = \cosh \sqrt{-E}$, $x_1(E) = \cosh (2 \sqrt{-E})$.

A direct calculation shows now that in both cases $I(E) = 0$. By continuity, we have also $I(0) = 0$. Thus, as in the discrete setting, the invariant vanishes identically in the free case. This suggests that the dynamical behavior of the trace map near $S_0$ is essential for a study of the continuum Fibonacci Hamiltonian in the small coupling regime (at least in compact energy regions), just as it was the case in the discrete setting and which has led to numerous recent advances \cite{DG09, DG11, DG12, DG13}.

\bigskip

Consider now the case $f_a = \lambda \cdot \chi_{[0,1)}$ and $f_b = 0 \cdot \chi_{[0,1)}$, where $\lambda \ge 0$. Clearly, when $\lambda = 0$, we obtain the free case considered above; and when $\lambda > 0$, the resulting potentials are aperiodic and irreducible. Let us compute the initial conditions $(x_1(E) , x_0(E) , x_{-1}(E))$ first in the case $E > \lambda$. We have (compare with (2.15) from \cite{TK}):
\begin{align*}
x_{-1}(E) & = \cos \sqrt{E}, \\
x_0(E) & = \cos \sqrt{E - \lambda}, \\
x_1(E) & = \cos \sqrt{E} \cos \sqrt{E-\lambda} -\frac{1}{2}\left(\sqrt{\frac{E}{E-\lambda}}+\sqrt{\frac{E-\lambda}{E}}\right)\sin \sqrt{E} \sin \sqrt{E-\lambda}.
\end{align*}
From this we can calculate $I(E)$ explicitly:
\begin{equation}\label{e.1}
I(E) = \frac{1}{4}\frac{\lambda^2}{E(E-\lambda)}\sin^2 \sqrt{E} \sin^2 \sqrt{E-\lambda}.
\end{equation}
Notice that if $\sin \sqrt{E} = 0$, then $\cos \sqrt{E} \in \{-1,1\}$, and we have either $(x_1(E) , x_0(E) , x_{-1}(E)) = (\cos \sqrt{E-\lambda}, \cos \sqrt{E-\lambda}, 1)$, which is close to $(1, 1, 1)$ if $E \gg 1$, or $(x_1(E) , x_0(E) , x_{-1}(E)) = (-\cos \sqrt{E-\lambda}, \cos \sqrt{E-\lambda}, -1)$, which is close to $(1, -1, -1)$ if $E \gg 1$.

Let us now calculate the initial conditions and $I(E)$ for $E \in (0, \lambda)$. We have (compare with (2.14) from \cite{TK}):
\begin{align*}
x_{-1}(E) & = \cos \sqrt{E}, \\
x_0(E) & = \cosh \sqrt{E - \lambda}, \\
x_1(E) & = \cos \sqrt{E} \cosh \sqrt{\lambda-E}+\frac{1}{2}\left(\sqrt{\frac{\lambda-E}{E}}-\sqrt{\frac{E}{E-\lambda}}\right)\sin \sqrt{E} \sinh \sqrt{\lambda-E},
\end{align*}
and hence (compare with (3.4) from \cite{TK})
\begin{equation}\label{e.2}
I(E) = \frac{1}{4} \frac{\lambda^2}{E(\lambda - E)} \sin^2 \sqrt{E} \sinh^2 \sqrt{\lambda-E}.
\end{equation}

With these explicit formulae, we can now establish the asymptotics of the local Hausdorff dimension of the spectrum in the small coupling regime as well as in the high energy regime.

\begin{coro}\label{c.1}
We have the following asymptotics:
\begin{align*}
\lim_{\lambda \to 0} \; \inf_{E \in \Sigma} \; \dim_\mathrm{H}^\mathrm{loc}(E,\Sigma) & = 1, \\
\lim_{K \to \infty} \; \inf_{E \in \Sigma \cap [K,\infty)} \; \dim_\mathrm{H}^\mathrm{loc}(E,\Sigma) & = 1.
\end{align*}
\end{coro}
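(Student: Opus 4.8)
The plan is to reduce both statements to the single fact that the Fricke--Vogt invariant $I(E)$ is uniformly small over the relevant part of the spectrum, and then to invoke Theorem~\ref{t.fiblocaldim}. First I would record two structural facts about $\Sigma$ in this model. Since $f_a, f_b \ge 0$, we have $H_\omega \ge -\frac{d^2}{dx^2} \ge 0$, so $\Sigma \subseteq [0,\infty)$ and only energies $E \ge 0$ are relevant; moreover $H_\omega$ is unbounded above, so $\Sigma$ is unbounded and each set $\Sigma \cap [K,\infty)$ is nonempty. By Theorem~\ref{t.fiblocaldim}(i), $\dim_\mathrm{H}^\mathrm{loc}(E,\Sigma) = D(I(E))$ with $D$ continuous, $D(0) = 1$, and $D \le 1$ throughout. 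Since continuity at $0$ gives, for each $\varepsilon > 0$, a $\delta > 0$ with $D(I) > 1-\varepsilon$ whenever $0 \le I < \delta$, it suffices to prove that $\sup\{ I(E) : E \in \Sigma \} \to 0$ as $\lambda \downarrow 0$ (for the first limit) and that $\sup\{ I(E) : E \ge K \} \to 0$ as $K \to \infty$ for fixed $\lambda > 0$ (for the second); the matching upper bound $\le 1$ is immediate from $D \le 1$.

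The heart of the argument is therefore a uniform estimate on $I(E)$ extracted from the explicit formulas \eqref{e.1} and \eqref{e.2}. The apparent singularities at $E = 0$ and $E = \lambda$ are illusory, and the clean way to see this is to regroup the factors so that the bounded quantities $\frac{\sin^2 t}{t^2} \le 1$ appear. For $E > \lambda$, formula \eqref{e.1} reads
\[
I(E) = \frac{\lambda^2}{4} \cdot \frac{\sin^2 \sqrt{E}}{E} \cdot \frac{\sin^2 \sqrt{E-\lambda}}{E-\lambda},
\]
and since each of the last two factors is at most $1$ (as $|\sin t| \le |t|$), we obtain the clean bound $I(E) \le \frac{\lambda^2}{4}$ for all $E > \lambda$, with the stronger decay $I(E) \le \frac{\lambda^2}{4E(E-\lambda)}$ available for the high-energy regime. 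For $0 < E < \lambda$, formula \eqref{e.2} becomes
\[
I(E) = \frac{\lambda^2}{4} \cdot \frac{\sin^2 \sqrt{E}}{E} \cdot \frac{\sinh^2 \sqrt{\lambda-E}}{\lambda-E},
\]
and using $\frac{\sin^2 \sqrt{E}}{E} \le 1$ together with the monotonicity of $t \mapsto \frac{\sinh^2 t}{t^2}$ on $(0,\infty)$ (so that the second factor is at most $\frac{\sinh^2 \sqrt{\lambda}}{\lambda}$), we obtain $I(E) \le \frac{\lambda}{4} \sinh^2 \sqrt{\lambda}$. Since $I(\cdot)$ is continuous, these bounds extend to the thresholds $E = 0$ and $E = \lambda$.

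With these bounds in hand the two limits follow. For $\lambda \downarrow 0$, combining the two estimates gives $\sup_{E \ge 0} I(E) \le \max\big( \frac{\lambda^2}{4}, \frac{\lambda}{4} \sinh^2 \sqrt{\lambda} \big) \to 0$, so every $E \in \Sigma$ satisfies $I(E) < \delta$ once $\lambda$ is small, whence $\dim_\mathrm{H}^\mathrm{loc}(E,\Sigma) = D(I(E)) > 1-\varepsilon$ uniformly and the infimum converges to $1$. For the high-energy limit with $\lambda$ fixed, the bound $I(E) \le \frac{\lambda^2}{4E(E-\lambda)}$ shows $\sup_{E \ge K} I(E) \to 0$ as $K \to \infty$, so again $D(I(E)) > 1-\varepsilon$ for all $E \in \Sigma \cap [K,\infty)$ once $K$ is large, and the infimum converges to $1$. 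I expect the only real obstacle to be the verification of uniform smallness of $I(E)$ near the thresholds $E = 0$ and $E = \lambda$, where the prefactor $\frac{\lambda^2}{E(E-\lambda)}$ blows up; the regrouping into the factors $\frac{\sin^2 \sqrt{E}}{E}$, $\frac{\sin^2 \sqrt{E-\lambda}}{E-\lambda}$, and $\frac{\sinh^2 \sqrt{\lambda-E}}{\lambda-E}$ is precisely what neutralizes this blow-up, after which everything reduces to the continuity of $D$ at $0$.
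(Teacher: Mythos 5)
Your proposal is correct and follows the same route as the paper: read off from \eqref{e.1} and \eqref{e.2} that $I(E)\to 0$ uniformly on $[0,\infty)$ as $\lambda\downarrow 0$ and that $I(E)\to 0$ as $E\to\infty$ for fixed $\lambda$, then apply Theorem~\ref{t.fiblocaldim}. The paper states these uniform limits without proof, whereas you supply the explicit bounds $I(E)\le\lambda^2/4$ and $I(E)\le\frac{\lambda}{4}\sinh^2\sqrt{\lambda}$ via the regrouping into $\sin^2 t/t^2$ and $\sinh^2 t/t^2$ factors, which is exactly the intended (and correct) justification.
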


\begin{proof}
From \eqref{e.1} and \eqref{e.2} we know that $I(E) \to 0$ as $\lambda \to 0$, uniformly in $E \ge 0$ (hence uniformly in $E \in \Sigma$). This implies the first statement of Corollary \ref{c.1} due to Theorem~\ref{t.fiblocaldim}.

Also, from \eqref{e.1} we see that for any fixed value of $\lambda$, we have $I(E) \to 0$ as $E \to \infty$, which implies the second statement, again by Theorem~\ref{t.fiblocaldim}.
\end{proof}

In the large coupling regime, we have the following result:

\begin{coro}\label{c.2}
For any compact $S \subset \R$, we have
$$
\lim_{\lambda \to \infty} \dim_\mathrm{H}(\Sigma \cap S) = 0.
$$
\end{coro}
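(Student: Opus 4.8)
The plan is to show that large coupling forces the Fricke--Vogt invariant to be uniformly large on $\Sigma \cap S$, and then to feed this into Theorem~\ref{t.fiblocaldim}, whose item (iii) guarantees $D(I) \to 0$ as $I \to \infty$. Fix a compact $S$, say $S \subseteq [-M,M]$, and restrict attention to $\lambda > M$. First I would localize the spectrum. Since $f_a,f_b \ge 0$, the potentials satisfy $V_\omega \ge 0$, whence $H_\omega \ge 0$ and $\Sigma \subseteq [0,\infty)$; applying the trace-map criterion recorded in the proof of Proposition~\ref{p.sigmab} at $E = 0$ (where $x_0$ and $x_1$ are both large) shows $0 \notin \Sigma$, so $\Sigma \cap S \subseteq (0,M]$. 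For such $E$ we are in the regime $0 < E < \lambda$, where \eqref{e.2} and the accompanying formulae give $x_{-1}(E) = \cos\sqrt{E}$, hence $|x_{-1}(E)| \le 1$, and $x_0(E) = \cosh\sqrt{\lambda-E} \ge \cosh\sqrt{\lambda-M} > 1$.

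The key step is to observe that membership in $\Sigma = B$ forces $|x_1(E)| \le 1$. Indeed, since $|x_0(E)| > 1$ holds automatically, the trace-map criterion with $n = 0$ (if $|x_1|>1$, $|x_0|>1$, and $|x_1 x_0| > |x_{-1}|$ then $E \notin B$) shows that $E \in \Sigma$ can hold only if $|x_1(E)| \le 1$ or $|x_1(E)|\,|x_0(E)| \le |x_{-1}(E)| \le 1$; in either case $|x_1(E)| \le 1$. Writing the Fricke--Vogt invariant (suppressing the argument $E$) as $I(E) = x_0^2 - 2 x_1 x_0 x_{-1} + (x_1^2 + x_{-1}^2 - 1)$ and using $|x_1|,|x_{-1}| \le 1$ together with $x_0 > 0$, I obtain
$$
I(E) \;\ge\; x_0^2 - 2 x_0 - 1 \;\ge\; \cosh^2\sqrt{\lambda-M} - 2\cosh\sqrt{\lambda-M} - 1 \;=:\; g(\lambda),
$$
a lower bound uniform over $E \in \Sigma \cap S$ with $g(\lambda) \to \infty$ as $\lambda \to \infty$.

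Finally I would transfer this to the global dimension. By Theorem~\ref{t.fiblocaldim}(i), $\dim_\mathrm{H}^\mathrm{loc}(E,\Sigma) = D(I(E)) \le \sup_{t \ge g(\lambda)} D(t)$ for every $E \in \Sigma \cap S$. Since $\Sigma \cap S$ is compact, a standard finite-cover argument combined with the countable stability of Hausdorff dimension yields $\dim_\mathrm{H}(\Sigma \cap S) \le \sup_{E \in \Sigma \cap S} \dim_\mathrm{H}^\mathrm{loc}(E,\Sigma) \le \sup_{t \ge g(\lambda)} D(t)$. Property (iii) of Theorem~\ref{t.fiblocaldim} gives $D(t) \to 0$ as $t \to \infty$, so $\sup_{t \ge R} D(t) \to 0$ as $R \to \infty$; taking $R = g(\lambda) \to \infty$ concludes the proof. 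The main obstacle is the second step: one must be certain that the hyperbolicity of the $a$-block (which makes $x_0$ exponentially large) together with the trace condition genuinely pins $|x_1|$ down to $\le 1$, and that the resulting lower bound on $I(E)$ is uniform over $\Sigma \cap S$ rather than merely pointwise. Everything else is bookkeeping with the explicit formulae of \eqref{e.2} and the soft passage from local to global Hausdorff dimension.
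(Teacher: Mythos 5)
Your proposal is correct. At the top level it follows the same strategy as the paper's proof: show that $\lambda \to \infty$ forces $\inf_{E \in \Sigma \cap S} I(E) \to \infty$, then conclude via Theorem~\ref{t.fiblocaldim}(iii). But the mechanism you use for the key lower bound on $I(E)$ is genuinely different. The paper works by contraposition from the closed-form expression \eqref{e.2}: if $I(E) \le N^2$ and $\lambda$ is large, then $|\sin\sqrt{E}|$ must be exponentially small, hence $|\cos\sqrt{E}| \ge 1/2$, hence $|x_1(E)| > 1$ in addition to $|x_0(E)| > 1 \ge |x_{-1}(E)|$, and the boundedness criterion expels $E$ from $B = \Sigma$. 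You run the criterion in the other direction: since $|x_{-1}(E)| = |\cos\sqrt{E}| \le 1$ and $x_0(E) = \cosh\sqrt{\lambda - E} > 1$ hold automatically for $0 < E \le M < \lambda$, membership in $B$ already pins $|x_1(E)| \le 1$ (your case analysis of the criterion at $n = 0$ is exactly right, and the paper itself applies the criterion at $n=0$, so there is no issue with the admissible range of $n$), after which the algebraic identity $I = x_0^2 - 2x_1 x_0 x_{-1} + x_1^2 + x_{-1}^2 - 1 \ge x_0^2 - 2x_0 - 1$ gives the uniform divergence. Your version is more economical -- it never uses the product formula \eqref{e.2} for $I(E)$, only the values of $x_{-1}$ and $x_0$ -- and it would survive replacing $f_a$ by any potential piece for which $|\tr M(a,E)| \to \infty$ uniformly on $S$ while $f_b = 0$. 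You also spell out the passage from the pointwise statement $\dim_\mathrm{H}^\mathrm{loc}(E,\Sigma) = D(I(E))$ to the bound on $\dim_\mathrm{H}(\Sigma \cap S)$ (finite subcover plus countable stability of Hausdorff dimension), which the paper leaves implicit. The preliminary reduction $\Sigma \cap S \subseteq (0,M]$ via $V_\omega \ge 0$ is harmless; one could equally handle $E = 0$ by continuity of the $x_n$.
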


\begin{proof}
Consider $E \in S$ with $I(E)\le N^2$, and so that $\lambda \gg E, N$. Then, due to \eqref{e.2},
$$
\sqrt{I(E)} \sim \lambda^{1/2} \left|\sin \sqrt{E} \sinh \sqrt{\lambda - E} \right| \le N,
$$
and hence $|\sin \sqrt{E}| \lesssim \frac{N}{\lambda^{1/2} e^{\lambda^{1/2}}} \ll 1$. This implies that $|\cos \sqrt{E}| \ge 1/2$. Now we have
$$
|x_{-1}(E)| = |\cos \sqrt{E}| \le 1, |x_0(E)| = |\cosh \sqrt{\lambda - E}| \sim e^{\lambda^{1/2}} > 1,
$$
and
\begin{align*}
|x_1(E)| & = \left| \cos \sqrt{E} \cosh \sqrt{\lambda - E} + \frac{1}{2} \left( \sqrt{\frac{\lambda - E}{E}} - \sqrt{\frac{E}{E - \lambda}} \right) \sin \sqrt{E} \sinh \sqrt{\lambda - E}\right| \\
& \gtrsim e^{\lambda^{1/2}} - C \lambda^{1/2} \left| \sin \sqrt{E} \sinh \sqrt{\lambda - E} \right| \\
& \gtrsim e^{\lambda^{1/2}} - N \\
& > 1.
\end{align*}
This implies $E \not\in B$ by \cite[Lemma~2]{S87} and hence $E \not \in \Sigma$ by Proposition~\ref{p.sigmab}. Therefore, for any $E \in S \cap \Sigma$, we must have $I(E) > N^2$ if $\lambda$ is sufficiently large, and this proves Corollary~\ref{c.2}.
\end{proof}

One in general expects that in the regime of small coupling or high energy, the local characteristics of the spectrum should approach those of the free case. Similarly, one expects that for fixed energy (or fixed compact energy region), the potential should dominate in the large coupling regime and hence the situation should become as singular as possible. Corollaries~\ref{c.1} and \ref{c.2} show that the expected behavior holds true in the simple situation of a piecewise constant potential, where the Fricke-Vogt invariant can be computed explicitly. It is quite natural to expect that these results should extend to much more general choices of the potential pieces. In fact, we ask the following:

\begin{quest}
Is it true that Corollaries~\ref{c.1} and \ref{c.2} hold regardless of the shape of the bump? That is, if we replace $f_a = \lambda \cdot \chi_{[0,1)}$ and $f_b = 0 \cdot \chi_{[0,1)}$ by general $f_a \in L^2(0,\ell_a)$ and $f_b \in L^2(0,\ell_b)$, do Corollaries~\ref{c.1} and \ref{c.2} continue to hold as stated?
\end{quest}

\subsubsection{The Kronig-Penney Model}

The Kronig-Penney model places local point interactions at a discrete set of points. Here we are interested in the case where the location of these points is dictated by a Fibonacci sequence. This model was considered in \cite{BJK} in an ``off-diagonal'' setting. Let us make the calculations also in the following (``diagonal'') setting: We take $\ell_a = \ell_b = 1$ and  $f_a(x) = \lambda \delta(x)$, $f_b(x) = 0$. Note that strictly speaking, this model is not contained in the general class of models considered earlier in the paper. We nevertheless compute the invariant in this case as, together with our earlier results, this is quite constructive. We leave the extension of the general part to the interested reader (note that \cite{KLS} actually did consider measures rather than potentials).

For $E > 0$, we have
\begin{align*}
M(b,E) & = \begin{pmatrix} \cos \sqrt{E} &  \frac{1}{\sqrt{E}}\sin \sqrt{E}\\ -\sqrt{E}\sin\sqrt{E} & \cos \sqrt{E} \end{pmatrix}, \\
M(a,E) & = \begin{pmatrix} \cos \sqrt{E} &  \frac{1}{\sqrt{E}}\sin \sqrt{E}\\ -\sqrt{E}\sin\sqrt{E} & \cos \sqrt{E} \end{pmatrix}\begin{pmatrix} 1 &  0\\ \lambda & 1 \end{pmatrix} \\
& = \begin{pmatrix} \cos \sqrt{E}+\frac{\lambda}{\sqrt{E}}\sin \sqrt{E} &  \frac{1}{\sqrt{E}}\sin \sqrt{E}\\ \lambda\cos \sqrt{E}-\sqrt{E}\sin\sqrt{E} & \cos \sqrt{E} \end{pmatrix}.
\end{align*}

In this case we have:
$$
\left\{
  \begin{array}{ll}
    x_{-1}(E)=\cos\sqrt{E},  \\
    x_0(E)=\cos2\sqrt{E}+\frac{\lambda}{2\sqrt{E}}\sin \sqrt{E},  \\
    x_1(E)=\cos2\sqrt{E}+\frac{\lambda}{2\sqrt{E}}\sin 2\sqrt{E}.
  \end{array}
\right.
$$

Explicit calculations show that in this case
\begin{equation}\label{e.kpinv}
I(E) = \frac{\lambda^2}{4 E} \sin^2 \sqrt{E}
\end{equation}
(compare with Section 2 in \cite{BJK}). Notice that, irrespectively of the value of the coupling constant $\lambda$, if $\sin \sqrt{E} = 0$ for some energy, then the formulae above show that this energy is in the spectrum, and the corresponding point is on internal part of the Cayley cubic. Since there are no isolated points in the spectrum, this implies that at those points the local Hausdorff dimension of the spectrum is equal to one (which explains the nature of ``pseudo bands'' mentioned in \cite{BJK}, see also \cite{KS} where a similar observation was made), and in all other points of the spectrum, its local Hausdorff dimension is strictly less than one. In particular this shows that the analog of Corollary \ref{c.2} does not hold in this case, which shows an essential difference between the two simple models we have considered.

Certainly, due to \eqref{e.kpinv}, the analog of Corollary \ref{c.1} does hold for this model. This shows that at least the heuristics leading to the general expectation in the small coupling and high energy regimes do apply in this case.

\section*{Acknowledgments} The plots of $S_I$ for $I < 0$, $I = 0$, and $I > 0$ we show in Figure~1 were generated by William Yessen. We are grateful to him for the permission to include them here. We also thank Christian Remling and G\"unter Stolz for useful comments on the literature.


\end{document}